\numberwithin{equation}{section}
\theoremstyle{plain}
\newtheorem{theorem}{Theorem}[section]
\newtheorem{lemma}[theorem]{Lemma}
\newtheorem{proposition}[theorem]{Proposition}
\theoremstyle{definition}
\newtheorem{definition}[theorem]{Definition}
\theoremstyle{remark}
\newtheorem{remark}[theorem]{Remark}
\renewcommand{\epsilon}{\varepsilon}
\newcommand{\Opw}{\operatorname{Op^w}}
\begin{document}
\title{Long-time behavior of resonant time-dependent perturbations of periodic transport equations on $\R$}

\author{Maria Teresa Rotolo\footnote{	International School for Advanced Studies (SISSA), Via Bonomea 265, 34136, Trieste, Italy. \newline 
	 \textit{Email:} \texttt{mrotolo@sissa.it}}}
\date{}

\maketitle

\begin{abstract}
We consider linear, time-dependent and skew-adjoint perturbations of periodic transport equations on the one-dimensional torus. We describe the long-time behavior of solutions for all non-degenerate perturbations in resonant regime, proving that either there exist solutions whose Sobolev norms explode exponentially fast, provoking energy transfer phenomena, or all solutions remain stable for arbitrarily long time scales. The proof combines pseudodifferential tools with dynamical systems results:  we perform a resonant normal form procedure to reduce our  analysis to the classical dynamics for the resonant equation. The main difficulty lies in the proof of the instability result, for which we explicitly construct an escape function associated to the dynamics. This is obtained by means of a positive commutator estimate on the operator associated with the escape function, exploiting microlocal analysis. 
\end{abstract}

\tableofcontents

\section{Introduction and main result}

In this article we study long-time behavior of solutions for a class of linear and time-dependent transport equations on the one-dimensional torus. Two phenomena that typically occur both in linear and nonlinear PDEs are long-time stability of solutions or existence of unstable orbits, and a valid tool to depict such behaviors is the study of the Sobolev norms \eqref{sobolev.spaces} of solutions. On the one hand, an unbounded growth in time of the positive Sobolev norms of some solutions is an indicator of instability, in the form of energy transfer phenomena; on the other, stable equations are characterized by uniform-in-time bounds for all Sobolev norms of all solutions, on long time scales. 
The equations that we consider are small in size, $2\pi$-periodic in time perturbations of constant coefficient transport equations. Precisely, 
\begin{equation}\label{initial.transp.eq}
    \pa_t u= (m + \e V(t, x))\pa_x u + \frac{\e}{2} \pa_x V(t,x) u, \quad x \in \T := \R / 2\pi \Z, \ t \in \R,
\end{equation}
where $m \in \N^+$, $\e$ is a positive parameter, $V \in C^{\infty}(\T^2; \R)$. Observe immediately that the right-hand side of \eqref{initial.transp.eq} is skew-adjoint over $L^2(\T)$, thus one has, for every solution of \eqref{initial.transp.eq},
\begin{equation}
    \norm{u(t)}_{L^2(\T)}= \norm{ u(0)}_{L^2(\T)}, \quad \forall t \in \R. 
\end{equation}
Moreover, defining the scale of Sobolev spaces $\cH^s$, $s \in \R$ on the torus as 
\begin{equation}\label{sobolev.spaces}
    \cH^s(\T, \C)=\cH^s:=\{ u \in L^2(\T) : \norm{u}_s^2:=\sum_{k \in \Z} \la k \ra^{2s} |u_k|^2 < +\infty\}, 
\end{equation}
(where we use the notation $\la k \ra:=(1+|k|^2)^{\frac12}$ for the Japanese brackets and we denote by $\{u_k\}_{k \in \Z}$ the Fourier coefficients of the function $u$), one has that, for $\e=0$, all Sobolev norms of all solutions of \eqref{initial.transp.eq} are constant in time:
\begin{equation}
    \norm{u(t)}_s=\norm{u(0)}_s, \quad \forall s \in \R, t \in \R.
\end{equation}
This property is anyway not stable under perturbation: as soon as $\e \neq 0$, the presence of a perturbation can lead to unbounded paths. 

In the recent work \cite{Riviere.Rotolo}, the authors consider equation \eqref{initial.transp.eq} on $\T^n$ for any $n \in \N^+$ and prove that there exists an open set of time-dependent perturbations $V \in C^\infty(\T \times \T^{n})$ that provoke unstable solutions (where, by unstable solutions, we intend solutions exhibiting unbounded growth in time of their positive Sobolev norms). Moreover, this set also turns out to be dense when $n=2$. In the present work, we consider instead the one-dimensional case and we analyze the presence of stable and unstable solutions for all time-dependent perturbations $V$ having non degenerate resonant average (see Definitions \ref{non.deg}, \ref{resonant.average}). We anticipate here that such set turns out to be dense in $C^\infty(\T^2; \R)$. 
Roughly speaking we prove that, for such perturbations, only two possibilities occur: either all Sobolev norms of all solutions are uniformly bounded for long time scales, or instability phenomena arise. 

We now give the preliminary definitions needed to state our main result. 

\begin{definition}[Resonant average]\label{resonant.average}
    Let $V \in C^{\infty}(\T^2; \C)$.
    For $k, \ell \in \Z$ denote by $\{v_{k, \ell}\}_{k, \ell \in \Z}$ the Fourier coefficients of $V$, i.e., 
$
V(t, x)= \sum_{k, \ell \in \Z} v_{k, \ell}e^{i (kx+\ell t)}
$, 
for all  $t, x$ in $\T$. 
We define the \emph{resonant average} of $V$ with respect to the frequency $m \in \N^+$ as 
\begin{equation}\label{1d.average}
\la V \ra_m(x):= \sum_{k \in \Z} v_{k, mk} e^{ikx}.
\end{equation}
We say that a function $V(t, x)$ is \emph{completely resonant} with respect to the frequency $m \in \N^+$ if $v_{k, \ell}=0$ for all $\ell \neq m k$, or equivalently if $V(t, x) \equiv\la V \ra_m (x+ mt)$.
\end{definition}

\begin{remark}
One can verify that \eqref{1d.average} coincides with $ \la V \ra_m(x):= \frac{1}{2\pi} \int_{\T} V(t, x- m t) \di t$,
which is the time average of $V$ along the unperturbed flow (i.e., the flow of \eqref{initial.transp.eq} with $\e=0$).  
\end{remark}

\begin{definition}\label{non.deg}
    We say that a function $W \in C^\infty(\T; \C)$ has a degenerate zero in $x_0 \in \T$ if $W(x_0)=0$ and $W'(x_0)=0$. We say that a function is non degenerate if it does not have any degenerate zero. 
\end{definition}

We also introduce the following terminology:
\begin{definition}\label{res.non.deg}
We say that a function $V \in C^\infty(\T^2; \R)$ is
\begin{itemize}
    \item \emph{resonantly stable} with respect to the frequency $m \in \N^+$ (or simply  resonantly stable) if its resonant average $\la V \ra_m$ (see Definition \ref{resonant.average}) does not have any zero;
    \item \emph{resonantly unstable} with respect to the frequency $m \in \N^+$ (or simply resonantly unstable)
 if its resonant average $\la V \ra_m $ (see Definition \ref{resonant.average}) has at least one zero, but all its zeroes are non degenerate. 
 \end{itemize}
\end{definition} 

Our main result is the following theorem. 

\begin{theorem}\label{main.thm}
For any $m \in \N^+$ let
\begin{equation}\label{Vs}
    \cV_{m, s}:= \{ V \in C^\infty(\T^2; \R) : V \mbox{ is resonantly stable } \}, 
\end{equation}
\begin{equation}\label{Vu}
    \cV_{m, u}:= \{ V \in C^\infty(\T^2; \R) : V \mbox{ is resonantly unstable } \},
\end{equation}
\begin{equation}\label{Vd}
    \cV_{m, d}:= \{ V \in C^\infty(\T^2; \R) : \la V\ra_m \mbox{ has  degenerate zeroes } \},
\end{equation}
where we recall Definitions \ref{resonant.average}, \ref{non.deg} and \ref{res.non.deg}. The following holds:
\begin{enumerate}
    \item \b{Long-time stability:} for any $V \in \cV_{m, s}$, for any $N > 0$ there exists $\e_N(V)>0$ such that, for any $0< \e < \e_N(V)$ and for any $s>0$,  there exists $C_1>0$ (independent of $\e$) such that, for any $u_0(x) \in \cH^s(\T)$, the corresponding solution $u(t,x)$ of equation \eqref{initial.transp.eq} (i.e. $u(0, x)=u_0(x)$) satisfies 
    \begin{equation}\label{stable.norms}
        \norm{u(t)}_s \leq C_1 \norm{u_0}_s \quad \forall \, |t| \leq C_V \e^{-(N+1)}, \, \mbox{ where } \ C_1 \equiv C_1(s, V)
    \end{equation}
    for some $C_V>0$.
    
    \item \b{Instability:} for any $V \in \cV_{m, u}$ there exist $\e(V)>0$, $\delta_1, \delta_2>0$ such that, for any $0< \e < \e(V)$  and for any $s>1$ there exists $u_0 \in \cH^s(\T)$,  such that the corresponding solution $u(t, x)$ of equation \eqref{initial.transp.eq} satisfies: 
    \begin{equation}\label{cond.growth}
        \norm{u(t)}_s \geq \delta_1 e^{\e \delta_2 t} \norm{u_0}_s, \quad \forall \ t >0\,.
    \end{equation}

    \item \b{Genericity:} The set $\cV_{m, u} \cup \cV_{m, s}$ is open and dense in $C^\infty(\T^2;\R)$ endowed with its Frechét topology. 
\end{enumerate}
\end{theorem}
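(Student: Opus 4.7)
The plan is to reduce all three statements to properties of the resonant average $\la V\ra_m$ via a resonant normal form. I would first pass to the moving frame $y=x+mt$ through $w(t,y):=u(t,y-mt)$, which eliminates the constant transport and produces
\[\pa_t w = \e\tilde V(t,y)\pa_y w + \tfrac{\e}{2}\pa_y\tilde V(t,y)\,w,\qquad \tilde V(t,y):=V(t,y-mt),\]
whose time-average is exactly $\la V\ra_m(y)$ by the remark after Definition \ref{resonant.average}. A standard iterative resonant normal form (conjugating by near-identity operators $I+\e S_j(t,y,D_y)$ obtained by solving the homological equations associated with $\pa_t$ in the co-moving frame) produces, after $N$ steps,
\[\pa_t w = \e A w + \e^{N+1} R_N(t) w,\qquad A := \la V\ra_m(y)\pa_y + \tfrac12 \la V\ra_m'(y) = \Opw\bigl(i\la V\ra_m(y)\eta\bigr),\]
with $A$ skew-adjoint and $R_N(t)$ uniformly bounded on every $\cH^s$. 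Since the conjugation is an $\cH^s$-isomorphism $\e$-close to the identity, each conclusion below transfers back to $u$.

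\emph{Stability (Part 1).} If $V\in\cV_{m,s}$ then $\la V\ra_m$ is nowhere zero, so the smooth torus diffeomorphism $\Phi(y):=c\int_0^y\la V\ra_m(z)^{-1}\,dz$, with $c:=2\pi/\int_0^{2\pi}\la V\ra_m(z)^{-1}\,dz$, straightens $A$ into $c\,\pa_\theta$, a skew-adjoint constant-coefficient transport that is an isometry of every $\cH^s$. Pulling back, all Sobolev norms are equivalent before and after. A direct energy estimate applied to $\pa_t w=\e A w + \e^{N+1}R_N(t) w$ together with Grönwall yields $\norm{w(t)}_s\le C\norm{w(0)}_s$ for $|t|\le C_V\e^{-(N+1)}$, giving \eqref{stable.norms}.

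\emph{Instability (Part 2), the main obstacle.} If $V\in\cV_{m,u}$, pick a non-degenerate zero $y_0$ of $\la V\ra_m$ and set $\lambda:=\la V\ra_m'(y_0)\neq 0$. Writing $\pa_t w=-iHw$ with $H:=iA$ self-adjoint of principal symbol $h(y,\eta):=-\la V\ra_m(y)\eta$, the Hamilton flow of $h$ has $(y_0,0)$ as hyperbolic fixed point with unstable manifold a half-fiber $\{y=y_0,\ \mathrm{sgn}(\eta)=\sigma\}$ (with $\sigma\in\{\pm1\}$ fixed by the sign of $\lambda$). Following a Faure--Sj\"ostrand-type construction, I would build a real symbol $a(y,\eta)$ of logarithmic growth comparable to $s\log\la\eta\ra$ in a small conic neighbourhood $\Gamma$ of this unstable half-fiber, tapered to $0$ outside, and satisfying a positive-commutator estimate
\[\{h,a\}(y,\eta)\ \ge\ \kappa\, s\, \chi_\Gamma(y,\eta) - C,\qquad \kappa>0.\]
Quantizing $G:=\Opw(e^{a/2})$, the Weyl calculus gives
\[\tfrac{d}{dt}\norm{Gw(t)}^2 = \e\,\la [G^*G,A]w,w\ra + O(\e^{N+1})\norm{w}_s^2 \ge \e\kappa\norm{Gw}^2 - \e C\norm{w}^2.\]
Choosing $w(0)$ microlocalized in $\Gamma$ so that $\norm{Gw(0)}\gtrsim\norm{w(0)}_s$, Grönwall combined with $L^2$-conservation yields $\norm{w(t)}_s\gtrsim\norm{Gw(t)}\gtrsim e^{\e\kappa t/2}\norm{w(0)}_s$, which is \eqref{cond.growth}. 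The truly delicate point is this escape-function construction: it must live globally on the compact $\T$ while being positive-commutator only in $\Gamma$, must absorb both the subprincipal contribution coming from $A$ and the normal-form remainder $\e^{N+1}R_N$ uniformly in $\e$, and must simultaneously provide a weight $G$ large enough on a workable family of initial data.

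\emph{Genericity (Part 3).} By Definitions \ref{non.deg}--\ref{res.non.deg}, $\cV_{m,s}\cup\cV_{m,u}$ is exactly the set of $V$ for which $0$ is a regular value of $\la V\ra_m\in C^\infty(\T;\R)$. The map $\la\cdot\ra_m:C^\infty(\T^2;\R)\to C^\infty(\T;\R)$ is continuous, linear, and surjective (a right inverse sends $W(x)$ to $V(t,x):=W(x+mt)$). Openness is immediate: a finite set of transverse zeros of $\la V\ra_m$ persists under $C^1$-small---hence Fr\'echet-small---perturbations. For density, given $V$ and $\delta>0$, Sard's theorem applied to $\la V\ra_m$ produces almost every $c\in(-\delta,\delta)$ as a regular value, so $\la V-c\ra_m=\la V\ra_m-c$ has $0$ as a regular value and $V-c$ is the desired $\delta$-close perturbation.
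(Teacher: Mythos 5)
Your Part 3 matches the paper's argument almost verbatim (linearity and continuity of $V\mapsto\la V\ra_m$, openness from persistence of transverse zeros, density via Sard by subtracting a small regular value), and your Part 1 follows the same overall route (resonant normal form to leading term $\la V\ra_m$, then straightening by $\Phi(y)=c\int_0^y\la V\ra_m^{-1}$, then Gr\"onwall). One caveat on Part 1: conjugating by near-identity pseudodifferential operators $I+\e S_j(t,y,D_y)$ is problematic here, because the homological equation forces $S_j$ to be a \emph{first-order} operator (it must cancel the non-resonant part of $\e V\pa_y$), so $I+\e S_j$ is not a bounded perturbation of the identity on a fixed $\cH^s$ and its invertibility and uniform $\cH^s\to\cH^s$ bounds do not come for free. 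The paper avoids this precisely by conjugating with unitary changes of variables induced by torus diffeomorphisms $x\mapsto x+\e^k\beta(t,x)$ (Lemma \ref{preliminary.o1}), which are exactly bounded with bounded inverse on every $\cH^s$; you would need either this device or the full flow $e^{\e S_j}$ of a skew-adjoint first-order generator, which amounts to the same thing.

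The genuine gap is in Part 2. The entire difficulty of the instability result is the construction of the escape function, and your proposal asserts it ("I would build a real symbol $a$ \dots satisfying a positive-commutator estimate") rather than constructing it; you even flag it yourself as "the truly delicate point." Moreover, the object you propose --- a logarithmic weight $a\sim s\log\la\eta\ra$ supported in a conic neighbourhood of one unstable half-fiber, quantized as $G=\Opw(e^{a/2})$ --- is a variable-order symbol in the Faure--Sj\"ostrand style, which requires a calculus for such symbols and a careful treatment of $[G^*G,A]$ when $G$ has microlocally varying order; none of this is set up. The paper takes a different and more elementary route: it builds a \emph{globally} defined escape function $a$, positively homogeneous of degree one, with $\{h,a\}\geq\delta|\xi|$ on all of $T^*\T\setminus\{0\}$ (Proposition \ref{escape1}), by exploiting the attractor--repellor structure of the flow (local construction near the zeros of $\la V\ra_m$, extension to the basins by the convergent integral \eqref{def:ell+}, gluing via a slowly varying cutoff), and then runs a virial argument on $\cA(t)=\la\Opw(-\tilde a)u_1,u_1\ra$ using only standard symbol classes and G\aa{}rding. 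This also settles the choice of initial datum, which your sketch leaves vague: the paper produces $u_{1_0}$ supported where $a\leq-|\xi|/2$ (near the repellor) and oscillating at high frequency $\xi_0$, and verifies quantitatively that $\cA(0)$ dominates the error constants, which is what makes the lower bound hold for all $t>0$. Without an actual construction of the escape function and a concrete initial datum making $\cA(0)$ large, your Part 2 is a plan, not a proof.
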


First of all remark that
\begin{equation}\label{split.C.infty}
    C^\infty(\T^2; \R)= \cV_{m, s} \cup \cV_{m, u} \cup \cV_{m, d}
\end{equation}
(see \eqref{Vs},\eqref{Vu}, \eqref{Vd}). Thus, from the genericity property in point 3, Theorem \ref{main.thm} answers the question of stability of equation \eqref{initial.transp.eq} for the vast majority of time-dependent, $2\pi$-periodic perturbations $V$. 
Moreover, the lower bound in \eqref{cond.growth} is sharp: in \cite{MasperoRobert} the authors prove that all positive Sobolev norms of all solutions of \eqref{initial.transp.eq} grow at most exponentially fast. We remark anyway that our bound \eqref{cond.growth} is effective for $t \gtrsim \e^{-1}$. 

Next, let us motivate the definitions of the sets $\eqref{Vs}$, $\eqref{Vu}$ and $\eqref{Vd}$. Remark that the perturbation $V$ in \eqref{initial.transp.eq} has the same time-period as the solution to the unperturbed equation (i.e., $\e=0$): this is what we call \emph{resonant regime}. For this reason, we look at the dynamical properties of  $\la V \ra_m$ (see \eqref{1d.average}) in order to study long-time behavior of equation \eqref{initial.transp.eq}: roughly speaking, $\la V \ra_m$ collects only the resonant modes of $V$, that are the ones responsible for the formations of instabilities.

However, not all potentials having non-zero resonant average generate instabilities: the existence of at least a non-degenerate zero (see Definition \ref{non.deg}) of the resonant average is a necessary condition. As we will explain in detail in the next paragraph, this is linked to the dynamics of the system $\dot{x} = \la V \ra_m(x)$, for which such zero corresponds to a hyperbolic critical point. 

On the other hand, the presence of instabilities is a typical feature of resonant regimes that has recently been studied for different systems: 
\begin{itemize}
    \item[-] as anticipated, in the recent work \cite{Riviere.Rotolo}, instabilities for the higher-dimensional analogue of equation \eqref{initial.transp.eq} are studied. Also in the higher dimensional case, the result relies on the study of the dynamics of $\dot{x}= \la V \ra_m(x)$, with $x \in \T^n$, $n \geq 2$, which clearly is much more involved than the one-dimensional system considered in the present work. We refer to \cite{Riviere.Rotolo} and  \cite{DR.MS1} for a detailed description of this dynamics, and we only emphasize here that the key feature exploited is hyperbolicity of the system and presence of closed, disjoint attracting and repelling sets. This is the same property that we use in the present work for the one dimensional case, where it is an evident consequence of the structure of the function $\la V \ra_m$ (see Section \ref{sec.escape}).  
    \item[-] genericity of instability has also been obtained for harmonic oscillators in dimension one in \cite{Mas23} and in dimension two in \cite{LangellaMasperoRotolo25} . In both cases, the results come as an application of \cite{Mas22}, where the author identifies sufficient conditions on the perturbation that ensure presence of unstable solutions. Again the dimensionality of the problem plays a central role: in the one dimensional case, the study of the associated classical dynamics can be carried out in a quite direct way, tailored for the specific problem.  The step of raising the dimension is instead tackled in \cite{LangellaMasperoRotolo25} and requires significantly different and more involved techniques for the study of the dynamics, that comprehend arguments of contact geometry and differential topology.
\end{itemize}

Let us finally compare with the \emph{non-resonant} regime: in the recent paper \cite{BLM.red.T} the authors consider transport terms of the form  $\nu + \e V(\omega t, x)$, with $(\nu, \omega) \in \R^n \times \R^d$ and $V \in C^\infty(\T^n \times \T^d)$. In this setting, they are able to use KAM techniques to prove reducibility of the equation for a non-resonant choice of the parameters $(\nu, \omega) \in [1, 2]^{n+d}$, belonging to a suitably chosen Cantor set (see also \cite{BBHM2018}, \cite{BBM2014}, \cite{BGMR2017}, \cite{FeolaGiulianiMontaltoProcesi} for related results). Roughly speaking, they find a bounded transformation (on the Sobolev scale $\cH^s$ \eqref{sobolev.spaces})  that conjugates the equation to a diagonal one in the Fourier basis, up to smoothing remainders. A direct consequence of this result is that either there exist solutions whose positive Sobolev norms blow up exponentially (in the case where the diagonal operator has eigenvalues with nonzero real parts) or the Sobolev norms of all solutions remain bounded for all times (see \cite[Corollary 2.5]{BLM.red.T} for details). 

\subsection{Outline of the Proof}
In this section we quickly describe the main tools needed to prove Theorem \ref{main.thm}.

Even though the strategies to prove the long-time stability and the instability result are quite different, the starting point is the same: for every smooth perturbation $V \in C^\infty(\T^2; \R)$ we are able to conjugate the initial equation \eqref{initial.transp.eq} to a new transport equation, having as leading coefficient of the transport operator the resonant average $\la V \ra_m$ of $V$ (see Definition \ref{resonant.average}). We refer to Lemma \ref{lemma.normal} for a precise statement of this result. An analogous procedure has already been used in \cite{Riviere.Rotolo}, but we remark here that, thanks to the one dimensional setting of the present work, we are able to treat both stability and instability starting with this initial conjugation, whilst in \cite{Riviere.Rotolo} the procedure is tailored to the instability result. 

We now briefly describe the strategy to prove long-time stability: the core idea of our proof is that, when considering $V \in \cV_{m,s}$ (see \eqref{Vs}), the above-mentioned conjugation procedure results in a transport equation with never vanishing transport term, up to arbitrarily small-in-size remainders (precisely, it is possible to obtain remainders of order $\e^N$ for every $N \in \N^+$). This enables us to reduce the equation to a constant-coefficient transport equation, again up to arbitrarily small-in-size non-constant remainders. Roughly speaking, the constant  coefficient is obtained integrating the inverse of the resonant average (see Proposition \ref{prop.const.coeff} for the precise procedure). For this reason, we crucially need to have $\la V\ra_m \in \cV_{m, s}$. Finally, we remark that, due to the presence of the remainder term, we can not hope to obtain global in time stability. Anyway, the time scale for which stability holds can be chosen arbitrarily long, up to reducing the size of the perturbation accordingly.

We now turn to the  description of the proof of the instability result. In this case, we use a different approach: we use the information on the dynamics  of the ODE $\dot{x}= \la V \ra_m(x)$ to analyze the associated dynamics of the Hamiltonian vector field $\cX_h$ on $T^*\T \simeq \T \times \R$, with Hamiltonian $h(x, \xi): = \xi \la V\ra_m (x)$. As one can immediately see from the Hamilton equations (see \eqref{Xh}), the two dynamics are strictly related and, in particular, the key point of our proof is the construction of an \emph{escape function} for the Hamiltonian dynamics.  Roughly speaking, it is a function that increases along the flow of $\cX_h$. Next we use pseudodifferential calculus and microlocal tools such as G\aa{}rding inequality to recover the exponential growth in \eqref{cond.growth} via a positive commutator estimate (see Section \ref{sec.energy.est}). 

Anyway, we anticipate here that the construction of the escape function can be far from trivial, and strongly depends on the topological setting of the Hamiltonian system, and thus in turn on the spacial domain of the initial PDE. In the present work we adapt to our context the constructions of \cite{Colin_de_Verdi_re_2020} and \cite{LangellaMasperoRotolo25}, and we show that, in this one dimensional periodic case, it is sufficient to consider smooth vector fields
having only (but at least one) hyperbolic critical points. We briefly mention that, in higher dimensional contexts, it is no more sufficient to ask only for the existence of critical elements of the vector fields, but a central role is played by the hyperbolicity assumptions, that are in such cases much more involved: we only cite here \cite{Faure.Sjostrand}, where the microlocal point of view to treat similar types of problems is introduced for Anosov vector fields and the recent book \cite{Lefeuvre} and the references therein, for an introduction to the geometrical tools needed in higher dimension.

\subsection{Comparison with related literature}
In this last introductory section we comment about the existing literature on growth of Sobolev norms for linear, time-dependent perturbations of Schrödinger equations.

The general problem of growth of Sobolev norms as detector of instabilities has been addressed from the perspective of quantum mechanics in \cite{Bellissard} already in the 80's. For what concerns linear, time-dependent Schrödinger equations on the torus, the problem has been first issued by Bourgain in \cite{Bourgain1999} (see also the more recent work \cite{Chabert}) and in the recent years many authors have proved existence of particular time-dependent perturbations of linear Schrödinger equations, that generate unbounded orbits. 

First, unstable perturbations of the one dimensional Harmonic oscillators are identified in \cite{Delort}, \cite{Mas22}, and in \cite{Mas23}, where a generic class of unstable perturbations is found (see also \cite{BGMR}, \cite{Mas18},\cite{FaouRaphael}, \cite{LangellaMasperoRotolo25}). 
We also refer to the recent result \cite{LLZ} for the particular case of pseudodifferential perturbations which are quadratic polynomials both in $x$ and $D$ and we emphasize that the growth result obtained holds in \emph{any dimension}. Regarding related models, we report \cite{BGMRV} for perturbations of the Landau Hamiltonian, \cite{HausMaspero} for anharmonic oscillators and \cite{Thomann} for analogous results in the Bargmann-Fock space.

Taking the moves from the abstract result in \cite{Mas22}, in which the author uses for the first time Mourre's positive commutator theory to perform the key energy estimate for the growth result, in the recent works \cite{Mas23}, \cite{LangellaMasperoRotolo25}, \cite{Riviere.Rotolo}, this mechanism is exploited in different models. Precisely, \cite{LangellaMasperoRotolo25} is inspired by the construction of the escape function performed in \cite{CdVSR}, \cite{Colin_de_Verdi_re_2020}, that is in turn adapted also in the present work. On the other hand, \cite{Riviere.Rotolo} deals with the more involved construction of the escape function for the higher dimensional analogue of equation \eqref{initial.transp.eq}.

Finally, constructing solutions with unbounded orbits in nonlinear Schrödinger-like equations is an open challenge in this field (see, among others, \cite{Kuk}, \cite{Kuk2}, \cite{CKSTT}, \cite{gerard_grellier}, \cite{GL}, \cite{Chabert2}, \cite{guardia_kaloshin}, \cite{hani14}, \cite{hani15}, \cite{GG}, \cite{GHMMZ}). We mention separately the recent works \cite{MasperoMurgante} for an application of the escape function-method to a quasi-linear case and \cite{Vlasov} where similar strategies are used to study the exponential stability of nonlinear Vlasov equations on negatively curved manifolds.

\paragraph{Organization of the paper}
We conclude this introductory section by quickly outlining the structure of the paper. In Section \ref{sec.preliminary} we collect preliminary results needed along the work. In particular, in Section \ref{sec.pseudo} we introduce the class of pseudodifferential operators along with some of their properties, whilst in Section \ref{sec.conj} we define the class of transformations that we will use all along the work. 
In Section \ref{sec.stability} we prove the first point of Theorem \ref{main.thm}, using first a normal form procedure to reduce equation \eqref{initial.transp.eq} to constant coefficients up to small-in-size remainders, and then proving estimate \eqref{stable.norms}. 

In order to prove the second point of Theorem \ref{main.thm}, as already anticipated, we need to construct an escape function for the Hamiltonian lift of the flow generated by the resonant average of $V$ on the torus. Thus in Section \ref{sec.escape} we construct the escape function and in Section \ref{sec.energy.est} we use it to prove estimate \eqref{cond.growth}.
Finally, in Section \ref{sec.generic} we conclude the proof of Theorem \ref{main.thm} studying the structure of the sets $\cV_{m,s}, \cV_{m,u}$.

\paragraph{Acknowledgements.} The problem studied in the present article has been pointed out by A. Maspero and B. Langella, that the author warmly thanks for the idea and the support during the preparation of this manuscript. The author also wishes to thank G. Rivière for the useful discussions. 
The author acknowledges the support of the INdAM group GNAMPA.

\section{Preliminary results}\label{sec.preliminary}
In this section we collect some preliminary results that we will use in the rest of the work. Precisely, in Section \ref{sec.pseudo} we collect some definitions and results of symbolic calculus and in Section \ref{sec.conj} we define a class of linear transformations (that we will need to prove both the reducibility and the instability results in Theorem \ref{main.thm}) and we study their action on transport operators.

\subsection{Pseudodifferential operators}\label{sec.pseudo}
We define the class of symbols of order $\rho \in \R$ as
\begin{equation}\label{symbol}
     \bS^{\rho}(\T):=\{ a \in C^\infty(\T \times \R) :  \forall \alpha, \beta \in \N^n \ |\pa_x^{\alpha}\pa_{\xi}^{\beta} a(x, \xi) | \leq C_{\alpha, \beta} \la \xi \ra ^{\rho -|\beta|}\}, \quad \forall \rho \in \R. 
\end{equation}
We also denote by $C^\infty(\T; \bS^{\rho})$ the space of smooth maps $t \mapsto a(t, \cdot) \in \bS^\rho(\T)$, for every $\rho \in \R$. For every $a \in \bS^{\rho}(\T)$ we define the Weyl quantization $\Opw(a)$ of $a$ as 
\begin{equation}\label{weyls.quantization}
    \Opw(a)[u](x):= \frac{1}{2\pi} \int_{\T \times \R} e^{i(x-y)\xi}a\left(\frac{x+y}{2}, \xi\right)u(y)\di y \di \xi. 
\end{equation}
We say that an operator $A$ is pseudodifferential of order $\rho$ if there exists $a \in C^\infty(\T\times \R)$ such that $A=\Opw(a)$. We write $A \in \cS^{\rho}$ and we denote by $C^\infty(\T; \cS^\rho)$ the set of smooth maps $t \mapsto A(t) \in \cS^\rho$, for every $\rho \in \R$. 
We now collect in the following lemma the main results of symbolic calculus that we will need in the work. We refer for example to \cite{Shubin}, \cite{Zworski} for the proofs.\\

\b{Notation:} along the article we sometimes write mod $\cS^\rho$, $\rho \in \R$, next to an operator, to indicate that it is defined up to pseudodifferential remainders of order $\rho$. Analogously, we write mod $\bS^\rho$ next to symbols. We will also use the notation $\norm{\cdot}_{\cH_0 \to \cH_1}$ to denote the operatorial norm of the argument as operator from the Hilbert space $\cH_0$ to the Hilbert space $\cH_1$. 

\begin{lemma}\label{symbolic.calculus}
    Let $f \in \b{S}^{\rho_1}(\T)$, $g \in \b{S}^{\rho_2}(\T)$, $\rho_1, \rho_2 \in \R$. Then:
    \begin{enumerate}
        \item For any $s \in \R$, there exist $C_{f, s}>0$ such that: 
        \begin{equation}\label{e.cv}
          \|\Opw(f)u\|_{s-\rho_1} \leq C_{f, s}\|u\|_s, \quad \forall u \in \cH^s,
        \end{equation}  
        where we recall the definition of Sobolev norms in \eqref{sobolev.spaces};
        \item
        $\Opw(f)^*=\Opw(\overline{f})$ (here $^*$ denotes the adjoint operator) and in particular skew-adjoint operators have purely imaginary symbols;
        \item There exists $h \in \b{S}^{\rho_1+\rho_2}(\T)$ such that 
        $\Opw(f)\circ \Opw(g)=\Opw(h).$ Furthermore,
        \begin{equation}\label{comp.symbols}
            h:= f g -\frac{i}{2} \{ f, g\} \ \operatorname{mod}\ \b{S}^{\rho_1+\rho_2-2}(\T); 
        \end{equation}
        \item 
        There exists $\ell  \in \b{S}^{\rho_1+\rho_2-1}(\T)$ such that 
        $ i [\Opw(f),\Opw(g)] = \Opw(\ell)$. 
        Moreover
        \begin{equation}\label{comm.symbols}
        \ell:= \{ f, g\}\ \operatorname{mod}\ \bS^{\rho_1+ \rho_2-2}(\T);
        \end{equation}
        \item
        If $\rho_1<0$, then $\Opw(f)$ is compact as an operator from $L^2(\T)$ to itself. 
        \item If $a \in C^\infty(\T \times \R)$ is a compactly supported function, then $a \in \bS^{\rho}$ for any $\rho<0$. 
    \end{enumerate}
\end{lemma}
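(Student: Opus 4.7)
The plan is to establish each of the six points by standard techniques from Weyl calculus on the torus, which can be obtained either directly or by adapting to $\T$ the proofs in \cite{Shubin}, \cite{Zworski}. A convenient way to work on the torus is to identify $u \in L^2(\T)$ with a $2\pi$-periodic distribution on $\R$, apply the standard Weyl calculus on $\R$ (whose formulas coincide with \eqref{weyls.quantization}), and verify that periodicity is preserved since the symbols are $2\pi$-periodic in $x$.

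Points 2 and 6 are essentially direct. The adjoint formula is obtained by swapping $x \leftrightarrow y$ in the kernel $a\bigl(\tfrac{x+y}{2},\xi\bigr)$ of \eqref{weyls.quantization} and complex-conjugating, which transforms $a$ into $\overline{a}$. For point 6, if $a \in C^\infty(\T \times \R)$ has compact support in $\xi$, then $a$ and all its derivatives are bounded and vanish for $|\xi|$ large, so the estimate $|\pa_x^\alpha \pa_\xi^\beta a(x,\xi)| \leq C_{\alpha, \beta} \la \xi \ra^{\rho - |\beta|}$ holds for every $\rho < 0$.

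The technical core is the composition formula in point 3, which is also the step I expect to be the main obstacle. I would write $\Opw(f) \circ \Opw(g)$ as a four-fold oscillatory integral in the variables $(x, y, z, \xi, \eta)$, perform the Weyl change of variables, and apply the method of stationary phase in the intermediate pair $(y, \eta)$. The critical point gives the principal symbol $fg$, the second-order Taylor expansion of the phase at the critical point generates the subprincipal term $-\tfrac{i}{2}\{f,g\}$, and the remainder is controlled by repeated integration by parts, placing it in $\bS^{\rho_1 + \rho_2 - 2}$. Delicate care is needed because $f, g$ grow in $\xi$, so one must symbol-smooth and truncate appropriately before invoking stationary phase. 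Once point 3 is established, point 4 is immediate: in the difference $\Opw(f)\Opw(g) - \Opw(g)\Opw(f)$ the leading terms $fg$ and $gf$ cancel, while the subprincipal contributions combine into $-i\{f,g\}$, so multiplying by $i$ yields $\{f,g\}$ modulo $\bS^{\rho_1+\rho_2 - 2}$.

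For point 1, I would use a Calderón–Vaillancourt type argument: first establish $L^2 \to L^2$ boundedness for symbols in $\bS^0(\T)$ via a Cotlar–Stein estimate on a dyadic Littlewood–Paley decomposition of $f$ in $\xi$, and then reduce the general case to the zero-order one by conjugating with $\la D \ra^{s - \rho_1}$ and applying point 3 to absorb the lower-order errors. Finally, point 5 follows by combining point 1 with Rellich: if $\rho_1 < 0$ then $\Opw(f): L^2(\T) \to \cH^{-\rho_1}(\T)$ is bounded with $-\rho_1 > 0$, and the embedding $\cH^{-\rho_1}(\T) \hookrightarrow L^2(\T)$ is compact, so $\Opw(f)$ is compact on $L^2(\T)$.
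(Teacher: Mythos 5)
The paper gives no proof of this lemma, referring instead to \cite{Shubin} and \cite{Zworski} for these standard facts of Weyl calculus, and your outline (Calder\'on--Vaillancourt for boundedness, stationary phase for the composition formula, cancellation of leading terms for the commutator, Rellich for compactness) is precisely the standard argument found in those references. Your proposal is correct and takes essentially the same (cited) approach as the paper.
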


\begin{remark}\label{op.0and1}
    Let $p \in C^\infty(\T)$. Then $p \in \bS^0$ and $\Opw(p)[u](x)= p(x)u(x)$. Moreover
    \begin{equation}\label{id.diff.1}
        \Opw(i\xi p(x))= p(x) \pa_x + \frac12 p'(x). 
    \end{equation}
\end{remark}
\noindent Finally, we state the following G\aa rding inequality, referring to \cite[Theorem 9.11]{Zworski} for a proof.  
\begin{lemma}
    Let $a \in \bS^{\rho}(\T)$, $a \geq 0$. Then for every $u \in C^\infty(\T)$ we have
    \begin{equation}\label{eq.garding}
        \la \Opw(a) u, u \ra_{L^2(\T)} \geq - C_{\rho, a} \norm{u}^2_{\frac{\rho-1}{2}},
    \end{equation}
    for some $C_{\rho, a}>0$. 
\end{lemma}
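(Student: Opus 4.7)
The plan is to prove the inequality via the Friedrichs (or Wick) symmetrization, a classical device producing a manifestly positive alternative quantization which agrees with $\Opw$ up to a pseudodifferential remainder of order $\rho-1$.

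First I would introduce the Wick quantization $\mathrm{Op}^W$ on $T^*\T \simeq \T \times \R$ by
\[
\mathrm{Op}^W(a) := \int_{T^*\T} a(y,\eta)\, |\phi_{y,\eta}\ra\la \phi_{y,\eta}|\, dy\, d\eta,
\]
where $\phi_{y,\eta}$ is a family of Gaussian coherent states centered at $(y,\eta)$ with widths calibrated to the phase-space metric $|dx|^2 \la\eta\ra + |d\xi|^2 \la\eta\ra^{-1}$ (so of width $\la\eta\ra^{-1/2}$ in position and $\la\eta\ra^{1/2}$ in momentum), periodized in $y$ so as to live on the torus. Positivity is then transparent: when $a \geq 0$,
\[
\la \mathrm{Op}^W(a) u, u\ra_{L^2} = \int_{T^*\T} a(y,\eta)\, |\la u, \phi_{y,\eta}\ra_{L^2}|^2 \, dy\, d\eta \geq 0.
\]

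Next I would compare $\mathrm{Op}^W(a)$ with $\Opw(a)$. A direct phase-space computation realizes $\mathrm{Op}^W(a) = \Opw(a \ast G)$ for an appropriate Gaussian kernel $G$; Taylor-expanding $a\ast G$ around $(x,\xi)$, the first-order term vanishes by the even symmetry of $G$ and the quadratic term combines weights $\la\xi\ra^{-1}$ on $\pa_x^2 a \in \bS^{\rho}$ and $\la\xi\ra$ on $\pa_\xi^2 a \in \bS^{\rho-2}$, both contributions landing in $\bS^{\rho-1}(\T)$. Thus $\Opw(a) = \mathrm{Op}^W(a) - \Opw(r)$ with $r \in \bS^{\rho-1}(\T)$, and the conclusion follows from Cauchy--Schwarz together with Lemma \ref{symbolic.calculus}, item 1: since $\Opw(r) : \cH^{(\rho-1)/2} \to \cH^{-(\rho-1)/2}$ is bounded,
\[
\la \Opw(a) u, u\ra_{L^2} \geq -|\la \Opw(r) u, u\ra_{L^2}| \geq -\norm{\Opw(r) u}_{-(\rho-1)/2}\, \norm{u}_{(\rho-1)/2} \geq -C\norm{u}_{(\rho-1)/2}^2.
\]

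The main obstacle is the careful construction and analysis of $\mathrm{Op}^W$ in the periodic setting: one must periodize the coherent states on $\T$ and, more importantly, calibrate their widths to the phase-space metric so that the expansion of $a\ast G - a$ produces a gain of exactly one order with respect to $\bS^\rho$. A more elementary square-root argument --- setting $b := \sqrt{a} \in \bS^{\rho/2}(\T)$ and applying the Weyl composition formula \eqref{comp.symbols} --- works when $a$ is elliptic and bounded away from zero, and in that case it in fact yields the stronger bound $-C\norm{u}_{(\rho-2)/2}^2$; however, without ellipticity the zeros of $a$ obstruct smoothness of $\sqrt{a}$, and any $\delta$-regularization $\sqrt{a + \delta \la\xi\ra^\rho}$ injects a leftover of order $\rho$ that cannot be absorbed. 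The Friedrichs quantization is precisely the standard device designed to bypass this obstruction and to recover the loss of a single half-derivative.
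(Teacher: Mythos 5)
The paper does not prove this lemma at all: it is quoted directly from the literature (the text defers to \cite[Theorem 9.11]{Zworski}), so there is no in-paper argument to compare against. Your Friedrichs/anti-Wick symmetrization is precisely the standard textbook proof of the sharp G\aa{}rding inequality for the classes $\bS^\rho$, and your outline is correct: positivity of the Wick quantization handles the non-elliptic case, the remainder $r$ lands in $\bS^{\rho-1}$ by the parity and scaling count you give ($\la\xi\ra^{-1}\pa_x^2 a\in\bS^{\rho-1}$ and $\la\xi\ra\,\pa_\xi^2 a\in\bS^{\rho-1}$), and the final duality/Cauchy--Schwarz step with item 1 of Lemma \ref{symbolic.calculus} at $s=\tfrac{\rho-1}{2}$ closes the estimate. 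You also correctly diagnose why the naive square-root argument fails without ellipticity.

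The one point where your sketch glosses over a real technicality is the identity $\mathrm{Op}^W(a)=\Opw(a\ast G)$: with coherent-state widths calibrated to $\la\eta\ra$, the Wigner transform of $\phi_{y,\eta}$ is a Gaussian whose covariance depends on the centre $\eta$, so the comparison symbol is not a convolution against a single fixed kernel, and the Taylor-expansion bookkeeping must be done with the $\eta$-dependent scales (e.g.\ via a dyadic decomposition in frequency, or by invoking the Weyl--H\"ormander calculus for the metric $\la\xi\ra|dx|^2+\la\xi\ra^{-1}|d\xi|^2$). This is exactly the obstacle you flag yourself, it is standard, and it does not change the conclusion that the error is one order lower; so I would call this a correct proof strategy rather than a gap.
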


\subsection{Conjugation results}\label{sec.conj}
In this section we define a class of linear, invertible transformations and study their action on transport equations. This conjugation technique is already used in \cite{BLM.red.T} and \cite{FeolaGiulianiMontaltoProcesi}. 

The same transformation as the one presented in this section is studied in  \cite[Section 5.1]{Riviere.Rotolo}, for tori of any dimension. We report it here since we slightly modify the discussion to adapt it also to the proof of the stability result (see Section \ref{sec.stability}).

First of all we remark that a linear, invertible transformation $u=A(t)u_1$, with $A(t)$ periodic in time of period $2\pi$, transforms equation $\dot{u}=Hu$ into $\dot{u}_1=A(t)_*H u_1$, where the pushforward $A(t)_*H$ is given by 
\begin{equation}\label{pushforward}
A(t)_*H:=A(t)^{-1}[H A(t) -\pa_t A(t)].
\end{equation}
We remark, to avoid any confusion in the notation, that $\pa_t A(t)$ is the derivative in time of the linear transformation (and not the composition $\pa_t \circ A(t)$). Next, we consider a family of time-dependent diffeomorphisms of the torus of the form 
\begin{equation}\label{diffeo}
\varphi_k(t): \T \to \T, \quad x \mapsto x +\e^k \beta(t, x), \quad k \in \N^+, \ \e>0, \ \beta \in C^\infty(\T^2; \R).
\end{equation}
 It has been proved (see for example \cite[Lemma B.4]{Baldi}) that for $\e>0$ small enough $\varphi_k(t)$ is an invertible diffeomorphism and its inverse has the form 
\begin{equation}\label{inv.diffeo}
   \varphi^{-1}_k(t): \T \to \T, \quad y \mapsto y + \e^k \tilde{\beta}(t, y),  \quad k \in \N^+, \e>0,  
\end{equation}
for some $\tilde{\beta} \in C^{\infty}(\T^2; \R)$. We define the transformation associated with \eqref{diffeo} as: 
\begin{equation}\label{Phi}
   \tilde{\Phi}_k(t): u(x) \mapsto (1+ \e^k \pa_x \beta) ^{\frac12}  (u \circ \varphi_k) (x) ,  \quad \forall t \in \T, k \in \N^+, 
\end{equation}
\noindent with inverse given by 
\begin{equation}\label{inv.Phi}
\quad \tilde{\Phi}_k(t)^{-1}: u(y) \mapsto (1 + \e^k \pa_x \tilde{\beta}) ^{\frac12} (u \circ \varphi_k^{-1})(y), \quad \forall t \in \T, k \in \N^+.
\end{equation}
We stress here that $\tilde{\Phi}_k(t) \equiv \tilde{\Phi}_{k, \e, \beta}(t)$, i.e., the transformation depends both on $\beta$ and $\e$, but we will always write as in \eqref{Phi}, omitting such labels to simplify the notation. \\
Remark that, for every $k \in \N^+$, both $\tilde{\Phi}_k(t)$ and $\tilde{\Phi}_k(t)^{-1}$ are $2\pi$-periodic in time. We now show that, for every $k \in \N^+$, the operator $\tilde{\Phi}_k(t)$ in \eqref{Phi} is unitary: substituting the explicit expressions of \eqref{diffeo} and \eqref{inv.diffeo} in the identity
\begin{equation}\label{compose.inverse}
    \varphi_k\vert_{x= y + \e^k \tilde{\beta}(t,y)} \circ \varphi_k^{-1} (y)= y \qquad \forall t \in \R, \ y \in \T, \ k \in \N^+,
\end{equation}
and deriving \eqref{compose.inverse} with respect to $y$ one  obtains that, for every $\e>0$ and $\beta \in C^\infty(\T^2; \R)$,  
\begin{equation}\label{inv.det}
    1+ \e^k \pa_y \tilde\beta(t,y)= \frac{1}{1 +\e^k \pa_x \beta(t,x)\vert_{x=y + \e^k \tilde{\beta}(t, y)}}, \quad \forall y \in \T, \ k \in \N^+.
\end{equation}
This in turn gives, via a direct computation, unitarity of $\tilde{\Phi}_k(t)$, namely
\begin{equation}\label{unitary}
\tilde{\Phi}_k(t)^*=\tilde{\Phi}_k(t)^{-1}, \quad \forall t \in \T, k \in \N^+,
\end{equation}
where $\tilde{\Phi}_k(t)^*$ denotes the adjoint operator of $\tilde{\Phi}_k(t)$. We now turn to studying the action of $\tilde{\Phi}_k(t)$ on transport operators.

\begin{lemma}\label{preliminary.o1}
Take any $f$, $g \in C^\infty(\T^2; \R)$, $n \in \N^+$ and $\e>0$. Define 
\begin{equation}\label{H.n}
     H_n(t):= \Opw(i \xi (f(t,x) + \e^n g(t,x)) ).
    \end{equation}
     For any $\beta \in C^\infty(\T^2; \R)$ there exists $\e_\beta>0$ such that, for every $0< \e < \e_\beta$, the associated transformation $\tilde{\Phi}_n(t)$ defined as in \eqref{Phi} satisfies 
    \begin{equation}\label{exp.one1}
       \tilde{\Phi}_n(t)^{-1}H_n(t) \tilde{\Phi}_n(t) = \Opw(i\xi(f(t,x) +\e^n (g(t,x)+ f(t,x)\pa_x \beta(t,x))+ \e^{n+1} \cR^1_{n, \e}(t,x)))
    \end{equation}
    and  
    \begin{equation}\label{exp.one2}
        \tilde{\Phi}_n(t)^{-1}\pa_t \tilde{\Phi}_n(t)= \Opw(i\xi(\e^n \pa_t \beta(t,x) + \e^{n+1} \cR^2_{n, \e}(t,x))),
    \end{equation}
    where $\cR^{1}_{n, \e}$, $ \cR^{2}_{n, \e} \in C^\infty(\T^2;\R)$ have all seminorms uniformly bounded in terms of $\e \in [0, \e_\beta]$. 
\end{lemma}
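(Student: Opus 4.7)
The plan is to express both $\tilde{\Phi}_n(t)^{-1}H_n(t)\tilde{\Phi}_n(t)$ and $\tilde{\Phi}_n(t)^{-1}\pa_t\tilde{\Phi}_n(t)$ in closed form as Weyl quantizations of order-one symbols $i\xi q(t,x)$, and then Taylor expand $q$ in powers of $\e^n$. By Remark \ref{op.0and1} we have $H_n(t) = p\,\pa_x + \tfrac12\pa_x p$ with $p:=f+\e^n g$, while by \eqref{unitary} $\tilde{\Phi}_n(t)$ is unitary; hence the conjugated operator is first-order and skew-adjoint, necessarily of the form $q\,\pa_x+\tfrac12\pa_x q$ for some smooth real $q(t,x)$, i.e.\ the Weyl quantization of $i\xi q$ (again by Remark \ref{op.0and1}). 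It therefore suffices to identify $q$ and then expand it.

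To identify $q$, I apply $H_n$ to $\tilde{\Phi}_n u = (1+\e^n\pa_x\beta)^{1/2}(u\circ\varphi_n)$ using the chain and product rules, then precompose with $\tilde{\Phi}_n^{-1}$. The Jacobian factors $(1+\e^n\pa_x\beta)^{\pm 1/2}$ that appear telescope thanks to identity \eqref{inv.det}, yielding the closed-form expression
\[
q(t,x) \;=\; p(t,y)\bigl(1+\e^n\pa_x\beta(t,y)\bigr)\big|_{y=\varphi_n^{-1}(t,x)}.
\]
Differentiating \eqref{compose.inverse} in $y$ shows that $\tilde{\beta}(t,y) = -\beta(t,y) + O(\e^n)$ for $\e$ small enough, so that $y = x + O(\e^n)$. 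Taylor expanding $q$ around $\e^n = 0$ and retaining the zero- and first-order contributions yields \eqref{exp.one1}, since all remaining terms are of order $\e^{2n}\leq \e^{n+1}$ (using $n\geq 1$) and can be absorbed into the smooth remainder $\cR^1_{n,\e}$.

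For \eqref{exp.one2} the computation is analogous: $\pa_t$ only touches the $\beta$-factors in \eqref{Phi} (both the Jacobian prefactor and the argument of $u$), producing an extra $\e^n$ in front. Post-composing with $\tilde{\Phi}_n^{-1}$ and using \eqref{inv.det} once more to telescope the Jacobians, I obtain the closed identity
\[
\tilde{\Phi}_n(t)^{-1}\pa_t\tilde{\Phi}_n(t) \;=\; \Opw\bigl(i\xi\,\e^n\,\pa_t\beta(t,y)\big|_{y=\varphi_n^{-1}(t,x)}\bigr),
\]
and expanding $\pa_t\beta(t,\varphi_n^{-1}(x)) = \pa_t\beta(t,x) + O(\e^n)$ gives \eqref{exp.one2}.

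The uniform control of the seminorms of $\cR^1_{n,\e}$ and $\cR^2_{n,\e}$ in $\e\in[0,\e_\beta]$ follows from the $\e$-uniform smoothness of $\tilde{\beta}$ recalled from \cite[Lemma B.4]{Baldi} (which selects $\e_\beta$ so that $\varphi_n(t)$ is a diffeomorphism), together with uniform $C^k$-bounds on $(1+\e^n\pa_x\beta)^{\pm 1}$. The mildest obstacle is the bookkeeping needed to verify that the Jacobians telescope exactly so that the conjugated operator retains the transport form $q\,\pa_x+\tfrac12\pa_x q$ rather than becoming a general first-order pseudodifferential operator; once this structural fact is established, both expansions are essentially direct Taylor developments.
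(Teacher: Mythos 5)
Your argument is essentially the paper's own proof: both compute the conjugated operator in closed form as the transport operator with coefficient $(f+\e^n g)\bigl(t,\varphi_n^{-1}(t)[x]\bigr)\bigl(1+\e^n\pa_x\beta(t,\varphi_n^{-1}(t)[x])\bigr)$ (and $\e^n\pa_t\beta(t,\varphi_n^{-1}(t)[x])$ for the time-derivative part), identify it with a Weyl quantization of a symbol $i\xi q(t,x)$ via Remark \ref{op.0and1} and unitarity, and then Taylor expand in $\e$. One caveat you share with the paper: expanding the composition with $\varphi_n^{-1}$ also produces the term $-\e^n\beta\,\pa_x f$ at order exactly $\e^n$ (since $\tilde\beta=-\beta+O(\e^n)$), which does not appear in \eqref{exp.one1} and cannot be absorbed into $\e^{n+1}\cR^1_{n,\e}$ for general $f$; this is harmless in the paper's applications, where $\pa_x f=O(\e)$, but your claim that the first-order contribution is exactly $g+f\pa_x\beta$ glosses over it just as the paper does.
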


\begin{proof}
    Given $\beta \in C^\infty(\T^2;\R)$, let $\tilde{\Phi}_n(t)$ be the associated transformation in \eqref{Phi}, with $k \leadsto n$ and recall the expression for its inverse in \eqref{inv.Phi}. Fix $\e_\beta>0$ such that, for every $0 < \e < \e_\beta$,   $\tilde{\Phi}_n(t)$ is a bounded and invertible transformation  (recall previous discussion in \eqref{diffeo} and \eqref{inv.diffeo} and see \cite[Lemma B.4]{Baldi}). We first prove \eqref{exp.one1}. To this aim, first remark that, since $H_n(t)$ in \eqref{H.n} is skew-adjoint (see Lemma \ref{symbolic.calculus}), using \eqref{unitary} we have 
\[
[\tilde{\Phi}_n(t)^{-1}H_n(t)\tilde{\Phi}_n(t) ]^* \stackrel{\eqref{unitary}}{=}-\tilde\Phi_n(t)^{-1}H_n(t)\tilde{\Phi}_n(t).
\]
Next, using Lemma \ref{symbolic.calculus}, we can rewrite $H_n(t)$ in \eqref{H.n} as
\begin{equation}\label{Hn.diff}
    H_n(t)= (f(t,x) + \e^n g(t,x))\pa_x + \frac12\pa_x(f(t,x) + \e^ng(t,x)).
\end{equation}
Thus, using \eqref{Hn.diff}, \eqref{Phi}, \eqref{inv.Phi}, \eqref{inv.det} and performing a direct computation one obtains 
\begin{equation}\label{compute1}
\begin{split}
     \tilde{\Phi}_n(t)^{-1}H_n(t) \tilde{\Phi}_n(t) & = (f + \e^n g)(t, \varphi_n^{-1}(t)[x]) (1 + \e^n \pa_x \beta(t, \varphi^{-1}_n(t)[x])) \pa_x \\
     &+ \frac12\pa_x \left((f + \e^n g)(t, \varphi_n^{-1}(t)[x]) (1 + \e^n \pa_x \beta(t, \varphi^{-1}_n(t)[x]))\right)\\
     & \stackrel{\eqref{comp.symbols}}{=} \Opw( i\xi((f + \e^n g)(t, \varphi_n^{-1}(t)[x]) (1 + \e^n \pa_x \beta(t, \varphi^{-1}_n(t)[x]))).
     \end{split}
\end{equation}
Analogously, the expression for $\tilde\Phi_n(t)^{-1}\pa_t \tilde\Phi_n(t)$ can be explicitly computed and is given by 
\begin{equation}\label{compute2}
\begin{split}
    \tilde\Phi_n(t)^{-1}\pa_t \tilde\Phi_n(t) & = \e^n \pa_t \beta(t, \varphi^{-1}_n(t)[x]) \pa_x + \frac{\e^n}{2}\pa_x \left(\pa_t \beta(t, \varphi^{-1}_n(t)[x]) \right) \\
    & \stackrel{\eqref{comp.symbols}}{=} \Opw(i\xi\pa_t\beta(t, \varphi^{-1}_n(t)[x])). 
\end{split}
\end{equation}

Finally, performing a Taylor expansion of both \eqref{compute1} and \eqref{compute2} in a neighborhood of $\e=0$ we obtain respectively \eqref{exp.one1} and \eqref{exp.one2} concluding the proof. 
\end{proof}

We conclude this preliminary section with the following Lemma, which contains a computation that we will use repeatedly along the work. 

\begin{lemma}\label{preliminary.find.beta}
Let $m \in \N^+$ and $V \in C^\infty(\T^2; \R)$. There exists a smooth function $\beta_{m, V} \in C^\infty(\T^2; \R)$ such that 
\begin{equation}\label{find.beta}
    V(t,x) + m \pa_x\beta_{m, V}(t,x) - \pa_t \beta_{m, V}(t,x)= \la V \ra_m(x + mt)
\end{equation}
(recall the definition \eqref{1d.average} of resonant average of $V$ with respect to $m$). 
\end{lemma}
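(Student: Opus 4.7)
The plan is to solve \eqref{find.beta} in Fourier coefficients on $\T^2$. Writing $V(t,x)=\sum_{k,\ell\in\Z}v_{k,\ell}e^{i(kx+\ell t)}$ and searching for $\beta_{m,V}(t,x)=\sum_{k,\ell\in\Z}b_{k,\ell}e^{i(kx+\ell t)}$, a direct computation gives
\begin{equation}
m\pa_x\beta_{m,V}-\pa_t\beta_{m,V}=\sum_{k,\ell\in\Z}i(mk-\ell)\,b_{k,\ell}\,e^{i(kx+\ell t)},
\end{equation}
while, by Definition \ref{resonant.average},
\begin{equation}
\la V\ra_m(x+mt)=\sum_{k\in\Z}v_{k,mk}\,e^{i(kx+mkt)}.
\end{equation}
Hence \eqref{find.beta} splits, mode by mode, into a resonant part and a non-resonant part.

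For resonant indices $\ell=mk$, the coefficient of $b_{k,mk}$ in the left-hand side vanishes while the $v_{k,mk}$ terms on the two sides cancel, so the equation is automatically satisfied and one can simply set $b_{k,mk}:=0$. For non-resonant indices $\ell\neq mk$, I solve explicitly
\begin{equation}
b_{k,\ell}:=\frac{-i\,v_{k,\ell}}{\ell-mk}.
\end{equation}
Since the frequencies are integers and $m\in\N^+$, the "small divisors" satisfy $|\ell-mk|\geq 1$ on the non-resonant set, so no Diophantine condition is needed: smoothness of $V$ (equivalently, faster-than-polynomial decay of $v_{k,\ell}$) immediately transfers to the $b_{k,\ell}$, and $\beta_{m,V}\in C^\infty(\T^2)$. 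Reality is inherited from the Hermitian symmetry $\overline{v_{k,\ell}}=v_{-k,-\ell}$, which yields $\overline{b_{k,\ell}}=b_{-k,-\ell}$ on the non-resonant set (and trivially on the resonant set, where the coefficients are zero).

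There is essentially no real obstacle here, which is the point: the absence of a small-divisor issue is a direct manifestation of the resonant regime. Alternatively, one can prove the same statement by the method of characteristics for the transport operator $m\pa_x-\pa_t$: its characteristics are the lines $\{x+mt=\mathrm{const}\}$, closed on $\T^2$ with period $2\pi$, and the solvability condition along each closed characteristic $\{x+mt=c\}$ is precisely the vanishing of $\int_0^{2\pi}\bigl[\la V\ra_m(c)-V(t_0-s,x_0+ms)\bigr]\,ds$, which holds by the integral representation of $\la V\ra_m$ recalled in the remark after Definition \ref{resonant.average}. Either route produces the required $\beta_{m,V}$.
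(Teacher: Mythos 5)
Your proof is correct and follows the same route as the paper: solve \eqref{find.beta} mode by mode in Fourier, set the resonant coefficients of $\beta_{m,V}$ to zero, and invert the non-resonant ones with the divisor $\ell-mk$, whose absolute value is at least $1$ since $m\in\N^+$, so smoothness (and reality) of $\beta_{m,V}$ follows at once. Your formula $b_{k,\ell}=-i v_{k,\ell}/(\ell-mk)$ coincides with the paper's choice $v_{k,\ell}/(i(\ell-km))$.
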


\begin{proof}
    Denote by $T(t, x):=V + m \pa_x\beta_{m, V} - \pa_t \beta_{m, V}$, with $\beta_{m, V} \in C^\infty(\T^2; \R)$ to be determined and write the Fourier expansion of $T$: 
    \begin{equation}\label{T}
        T(t, x) =  \sum_{k, \ell \in \Z} ( v_{k, \ell} +i (k m -\ell) \beta_{{m, V}_{k, \ell}})e^{ikx+i\ell t},
    \end{equation}
where $V(t, x)= \sum_{k, \ell} v_{k, \ell} e^{i(kx+\ell t)}$ and $\beta_{m, V}(t, x)= \sum_{k, \ell \in \Z} \beta_{{m, V}_{k, \ell}} e^{i(kx+\ell t)}$. Define 
\begin{equation}\label{bkl}
\beta_{{m, V}_{k, \ell}}:=
\begin{cases}
    \frac{ v_{k, \ell}}{i(\ell-k m)} & \mbox{ if }  \ell  \neq k m , \\
    0 & \mbox{ otherwise.} 
\end{cases}
\end{equation}
The corresponding function $\beta_{m, V}(t,x):= \sum \beta_{{m, V}_{k, \ell}} e^{i(kx+ \ell t)}$ is a well defined smooth function over $\T^2$. This can be directly checked using that $V$ is smooth and that, since $m \in \N^+$, the sequence $\{\ell-km\}_{\ell \neq km}$ in \eqref{bkl} never accumulates to zero. Substituting the Fourier coefficients of $\beta_{m, V}$ (see \eqref{bkl}), in the expression of $T(t, x)$ in \eqref{T}, one obtains \eqref{find.beta}, concluding the proof.
\end{proof}

\section{Long-time Stability}\label{sec.stability}
In this section we prove the first point of Theorem \eqref{main.thm}. We anticipate here the idea of the proof: first we take any $V \in C^\infty(\T^2; \R)$ and, using the diffeomorphisms of the torus introduced in Section \ref{sec.conj}, we conjugate equation \eqref{initial.transp.eq} to a new transport equation having as leading transport term the resonant average $\la V \ra_m$ of $V$ (see \eqref{const.order.N} and recall Definition \ref{resonant.average} of resonant average). This is done in Lemma \ref{lemma.normal}.

Next, we exploit the characterization of the set $\cV_{m, s}$ to reduce such equation to constant coefficients (up to arbitrarily small in size remainders) and obtain the estimate in \eqref{stable.norms}. Precisely, the main result of this section is the following Proposition: 
\begin{proposition}\label{prop.const.coeff}
    For any $V \in \cV_{m, s}$ in \eqref{Vs}, for any $N \in \N^+ $ there exists $\e_N(V)>0$ such that, for any $0< \e < \e_N(V)$, there exists a $2\pi$-periodic in time, invertible linear map $\Phi_{N, \e}(t)$, $\hat{m}_N \in \R \setminus \{0\}$ and a smooth function $W_N \in C^\infty(\T^2; \R)$ such that 
    \begin{enumerate}
        \item for all $s \in\R$, there exists $C_{N, s} >0$ such that, for all $0\leq \e \leq\e_N(V)$ and for all $t \in \R$
 \begin{equation}\label{bound.Phi}
\left\|\Phi_{N, \e}(t)\right\|_{\cH^s \to\cH^s}+\left\|\Phi_{N, \e}(t)^{-1}\right\|_{\cH^s \to\cH^s }\leq C_{N, s}; 
 \end{equation}

 \item for any solution $u$ of equation \eqref{initial.transp.eq}, defining $v_N(t,x)$ as $u= \Phi_{N, \e}(t)v_N$ , then $v_N$ solves 
    \begin{equation}\label{const.order.N}
        \pa_t v_N = \Opw(i\xi \e (\hat{m}_N + \e^{N} W_N(t,x))) v_N, \quad \forall t \in \R. 
    \end{equation}
    \end{enumerate}
 \end{proposition}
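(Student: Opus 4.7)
The plan is a two-step reduction. First, I apply Lemma \ref{lemma.normal} to conjugate \eqref{initial.transp.eq} to an equation of the form
\begin{equation*}
\partial_t u_1 = \Opw\!\bigl(i\xi\,\e\bigl(a_{N,\e}(x) + \e^{N}\, r_{N,\e}(t,x)\bigr)\bigr)\, u_1,
\end{equation*}
where $a_{N,\e}\in C^\infty(\T;\R)$ is \emph{time-independent} with $a_{N,\e} = \la V\ra_m + O(\e)$, and $r_{N,\e}\in C^\infty(\T^2;\R)$ has seminorms uniform in $\e$. Under the hypothesis $V\in\cV_{m,s}$, the resonant average $\la V\ra_m$ has no zeros on the compact circle $\T$ and is therefore bounded away from $0$ by some $c>0$; for $\e$ sufficiently small the same holds for $a_{N,\e}$, which consequently has constant sign.

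Second, I construct a time-independent unitary diffeomorphism of $\T$ that flattens $a_{N,\e}$ to a constant. I set
\begin{equation*}
\hat{m}_N := 2\pi\Bigl(\int_0^{2\pi}\frac{dy}{a_{N,\e}(y)}\Bigr)^{-1}\neq 0,\qquad \Psi_{N,\e}(x) := \hat{m}_N \int_0^x \frac{dy}{a_{N,\e}(y)},
\end{equation*}
so that $\Psi_{N,\e}:\T\to\T$ is a smooth diffeomorphism with $\Psi_{N,\e}'(x) = \hat{m}_N/a_{N,\e}(x)$. The associated operator
\begin{equation*}
T_{N,\e}\,u(x) := \sqrt{\Psi_{N,\e}'(x)}\;u(\Psi_{N,\e}(x)),
\end{equation*}
which is a non-perturbative instance of the maps defined in Section \ref{sec.conj} and is therefore unitary on $L^2(\T)$, conjugates $\Opw(i\xi\, a_{N,\e}(x))$ to $\Opw(i\xi\, \hat{m}_N)$ by a direct calculation exploiting the identity $a_{N,\e}\Psi_{N,\e}'' + a_{N,\e}'\Psi_{N,\e}' = 0$. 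Since $T_{N,\e}$ is independent of $t$, the pushforward formula \eqref{pushforward} produces no additional $\partial_t T_{N,\e}$ term. Defining $\Phi_{N,\e}(t)$ as the composition of the Phase~1 transformation from Lemma \ref{lemma.normal} with $T_{N,\e}$, one obtains \eqref{const.order.N}, with $W_N$ being the image of $r_{N,\e}$ under a smooth $\e$-uniform change of variables.

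The bound \eqref{bound.Phi} follows by composition: the Phase~1 transformation is bounded on $\cH^s$ by Lemma \ref{lemma.normal}; $T_{N,\e}$ is $L^2$-unitary and bounded on every $\cH^s(\T)$ by standard Moser-type composition estimates, with constants controlled by the $C^k$-norms of $\Psi_{N,\e}$ and $\Psi_{N,\e}^{-1}$, which in turn are uniform in $\e$ thanks to the uniform bounds on $a_{N,\e}$ and $1/a_{N,\e}$.

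The main obstacle, in my view, is guaranteeing that the leading coefficient produced by Lemma \ref{lemma.normal} is genuinely \emph{time-independent}. If $a_{N,\e}$ retained any $t$-dependence, the flattening diffeomorphism $\Psi_{N,\e}$ would have to depend on $t$ as well, and the extra term $-T_{N,\e}^{-1}\partial_t T_{N,\e}$ in \eqref{pushforward} would be of size $O(\e)$, spoiling the structure of \eqref{const.order.N}. Verifying this time-independence reduces, via iterated application of Lemma \ref{preliminary.find.beta}, to checking that at every step of the resonant normal form only the Fourier modes $\ell = km$ survive in the leading symbol, i.e., only the resonant modes contribute to the coefficient that ultimately needs to be straightened.
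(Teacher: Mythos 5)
Your proposal follows essentially the same route as the paper: Lemma \ref{lemma.normal} indeed yields a time-independent leading coefficient $a_{N,\e}=\la V\ra_m+\e Z_N$ (because the corrections produced at each normal-form step are completely resonant and become $t$-independent after the shift $x\mapsto x-mt$), and the paper then straightens it with exactly the transformation you describe, with the same harmonic-mean formula for $\hat m_N$ and the same unitary change of variables $\Lambda u(t,x)=(1+\lambda'(x))^{1/2}u(t,x+\lambda(x))$. The obstacle you flag is precisely what the paper's Lemma \ref{lemma.normal} resolves, so the argument is correct as proposed.
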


 In the rest of this Section we will prove Proposition \ref{prop.const.coeff}, but we first use it to conclude the proof of point 1 of Theorem \ref{main.thm}.

\begin{proof}[Proof of Theorem \ref{main.thm}, point 1.]
    Fix $V\in \cV_{m, s}$, $N>0$, let $\e_N(V)>0$ be the threshold given by Proposition \ref{prop.const.coeff} and, for every $0< \e < \e_N(V)$ let $\Phi_{N, \e}$ be the corresponding transformation given by Proposition \ref{prop.const.coeff}. Recall that, defining $v_N(t,x)$ as $u= \Phi_{N,\e}(t) v_N$,  then $v_N$ solves \eqref{const.order.N}. Since $\Phi_{N, \e}(t)$ satisfies condition \eqref{bound.Phi} it is enough to prove that, for every $s \in \R$ there exists $C_{V,s}>0$ such that, for any $v_{N_0} \in C^\infty(\T)$, 
    \begin{equation}\label{stable.v}
        \norm{v_N(t)}_s \leq 2 \norm{v_{N_0}}_s \quad \forall \, |t| \leq C_{V,s} \e^{-(N+1)},
    \end{equation}
    where $v_N(t,x)$ is the solution of \eqref{const.order.N} satisfying $v_N(0,x)=v_{N_0}(x)$. Indeed, from the definition of $v_N$, this gives \eqref{stable.norms} with $C_1: = 2 C_{\Phi_N, s}^2$ (recall \eqref{bound.Phi}). In order to prove \eqref{stable.v}, we claim that there exists $C_{W_N, s}>0$ such that
    \begin{equation}\label{claim.stable}
        \frac{\di}{\di t}\norm{v_N}_s^2 \leq 2 \e^{N+1}C_{W_N, s} \norm{v_N}_s^2, \quad  \forall t, s\in \R.
    \end{equation}
    This immediately implies \eqref{stable.v} since integrating \eqref{claim.stable} one obtains 
    \begin{equation}
        \norm{v_N}_s \leq e^{C_{W_N, s} \e^{N+1} t} \norm{v_{N_0}}_{s}, \quad \forall \ t, s \in \R, 
    \end{equation}
    which is \eqref{stable.v} with $C_{V,s}:= \log(2) C_{W_N,s}^{-1}>0$. Thus we are left to show \eqref{claim.stable}. To this aim, we introduce the compact notation $H_N(t):=\Opw(i\xi\e(\hat{m}_N + \e^{N} W_{N}(t,x)))$, so that equation \eqref{const.order.N} reads $\pa_t v_N= H_N(t) v_N$. We obtain via a direct calculation, using skewadjointness of $H_N$
    
    \begin{equation}\label{derive.norm2}
        \frac{\di}{\di t} \norm{v_N}_s^2 = 2 \text{Re}( \la [ \Opw( \la \xi \ra^s) , H_N] v_N, \Opw( \la \xi \ra^s) v_N \ra).
    \end{equation}
    Next, using the explicit expression of $H_N(t)$ we have
    \begin{equation}\label{derive.norm3}
        [ \Opw( \la \xi \ra^s) , H_N] = \e^{N+1}[\Opw(\la \xi \ra^s), \Opw(i\xi W_N(t,x))] \in C^\infty(\T; \cS^{s}).
    \end{equation}
    Thus, via Cauchy-Schwarz inequality we get
    \begin{equation}
        \frac{\di}{\di t} \norm{v_N}_s^2 \stackrel{\substack{\eqref{derive.norm2}\\ \eqref{derive.norm3}}}{\leq} 2 \e^{N+1} \norm{ [ \Opw(\la \xi \ra^s), \Opw(i\xi W_N(t,x))]v_N}_{L^2} \norm{v_N}_{s} \leq 2 \e^{N+1}C_{W_{N, s}} \norm{v_N}_{s}^2,
    \end{equation}
    with $C_{W_N,s}:= \norm{[\Opw(\la \xi \ra^s), \Opw(i\xi W_N(t,x))]}_{\cH^s \to L^2}$, proving inequality \eqref{claim.stable} and concluding the proof. 
    \end{proof}

We now turn to the proof of Proposition \ref{prop.const.coeff}. We first prove the following Lemma, to reduce the transport operator of equation \eqref{initial.transp.eq} to a new one, having as leading term the resonant average of the potential. Precisely, we have the following result: 

\begin{lemma}\label{lemma.normal}
Let $V \in C^\infty(\T^2; \R)$ and take $u(t,x)$ any solution of equation \eqref{initial.transp.eq}. For every $N \in \N^+$ there exists $\e_N(V)>0$ such that, for every $0< \e < \e_N(V)$ there exists a $2\pi$-periodic, linear, invertible transformation $\Psi_{N, \e}(t): \cH^s \to \cH^s$, bounded for every $s \in \R$ and with bounded inverse, such that, defining $u_N(t, x)$ as $u(t,x)= \Psi_{N, \e}(t) u_N(t,x)$, then $u_N(t,x)$ solves
 \begin{equation}\label{vorrei}
     \pa_t u_N= \Opw(i\xi(\e \la V \ra_m(x) + \e^2  Z_N(x) + \e^{N+1} \tilde{W}_N(t,x)))u_N, 
 \end{equation}
 where $Z_N \equiv Z_N(x, \e)  \in C^\infty(\T; \R)$, $\tilde{W}_N \equiv W_N(x, \e) \in C^\infty(\T^2; \R)$ have all seminorms uniformly bounded in terms of $\e \in [0,\e_N(V)]$.
\end{lemma}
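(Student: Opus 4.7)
The plan is to iterate the conjugation of Lemma \ref{preliminary.o1}, with the homological equation solved by Lemma \ref{preliminary.find.beta} at each step, to remove the non-resonant time-dependent content of the transport symbol order by order, and then to perform a single final $O(1)$ unitary spatial translation to kill the constant $m\pa_x$ term and turn the completely resonant profiles into time-independent functions of the new spatial variable.

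Rewrite \eqref{initial.transp.eq} as $\pa_t u = \Opw(i\xi(m+\e V(t,x)))u$ via \eqref{id.diff.1}. I argue by induction on $n = 0,1,\ldots,N$ that there exist $\e_n > 0$, smooth real functions $S_1,\dots,S_n \in C^\infty(\T)$ and $R_n \in C^\infty(\T^2)$ (smoothly depending on $\e$ with seminorms uniform on $[0,\e_n]$), together with a $2\pi$-periodic operator $\Psi^{(n)}_\e(t) := \tilde\Phi_1(t)\cdots\tilde\Phi_n(t)$ bounded on every $\cH^s$ with bounded inverse (with $\Psi^{(0)}_\e := \mathrm{Id}$), such that $u_n := (\Psi^{(n)}_\e)^{-1}u$ satisfies
\[
\pa_t u_n = \Opw\!\Bigl(i\xi\bigl(m + \textstyle\sum_{k=1}^n \e^k S_k(x+mt) + \e^{n+1}R_n(t,x,\e)\bigr)\Bigr) u_n .
\]
The base case $n=0$ is the rewriting above with $R_0 = V$. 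For the step $n \to n+1$, Lemma \ref{preliminary.find.beta} applied to $R_n$ (pointwise in $\e$) yields $\beta_{n+1}:=\beta_{m,R_n} \in C^\infty(\T^2;\R)$ with $R_n + m\pa_x\beta_{n+1} - \pa_t\beta_{n+1} = \la R_n\ra_m(x+mt)$; defining $\tilde\Phi_{n+1} := \tilde\Phi_{n+1,\e,\beta_{n+1}}$ as in \eqref{Phi}, Lemma \ref{preliminary.o1} applied with $f := m + \sum_{k\le n}\e^k S_k(x+mt)$ and $g := R_n$, combined with the pushforward identity \eqref{pushforward}, produces the same identity with $n$ replaced by $n+1$, $S_{n+1} := \la R_n\ra_m$, and a new smooth $R_{n+1}$ uniformly bounded in $\e$ on some $[0,\e_{n+1}] \subseteq [0,\e_n]$.

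Once the induction reaches $n=N$, let $\tilde u_N := (\Psi^{(N)}_\e)^{-1} u$ and introduce the $O(1)$ unitary spatial translation $T(t) : w(x) \mapsto w(x-mt)$, which is $2\pi$-periodic in $t$ precisely because $m\in\N^+$ and $x\in\R/2\pi\Z$ and is an isometry on every $\cH^s$. Setting $u_N := T\,\tilde u_N$, i.e. $u_N(t,x) = \tilde u_N(t, x-mt)$, a direct chain-rule computation shows that the $m\pa_x$ of the transport operator is exactly cancelled by the $-m\pa_x$ produced when $\pa_t$ hits the shifted argument of $\tilde u_N$, while each $S_k(x+mt)$ is evaluated at $(x-mt)+mt = x$; hence
\[
\pa_t u_N = \Opw\!\Bigl(i\xi\bigl(\e\la V\ra_m(x) + \e^2 Z_N(x,\e) + \e^{N+1}\tilde W_N(t,x,\e)\bigr)\Bigr) u_N ,
\]
with $Z_N(x,\e) := \textstyle\sum_{k=2}^N \e^{k-2}S_k(x) \in C^\infty(\T;\R)$ (using $S_1 = \la V\ra_m$) and $\tilde W_N(t,x,\e) := R_N(t, x-mt, \e) \in C^\infty(\T^2;\R)$, matching \eqref{vorrei}. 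Finally set $\Psi_{N,\e}(t) := \tilde\Phi_1(t)\cdots\tilde\Phi_N(t)\,T(t)^{-1}$ and $\e_N(V) := \min_{n\le N}\e_n$; being a composition of $2\pi$-periodic factors each bounded on every $\cH^s$ with bounded inverse, it satisfies all the properties claimed.

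The main delicate point is that Lemma \ref{preliminary.o1} is stated for $f,g$ independent of $\e$, while at each inductive step the natural choice $f = m + \sum_{k\le n}\e^k S_k(x+mt)$ depends on $\e$. Inspection of its proof — or a direct Taylor expansion of the symbol $(f+\e^{n+1}g)(t,\varphi_{n+1}^{-1}[x])(1+\e^{n+1}\pa_x\beta_{n+1}(t,\varphi_{n+1}^{-1}[x]))$ about $\e=0$ — shows that all extra contributions generated by the $\e$-part of $f$ are of order $\e^{n+2}$, because $\pa_x f\vert_{\e=0} = \pa_x m = 0$ makes the leading correction vanish, and so are harmlessly absorbed into $R_{n+1}$. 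The uniformity in $\e$ of all seminorms of $S_k,R_n$ and of $\Psi_{N,\e}^{\pm 1}$ reduces to the uniform smooth dependence of the inverse diffeomorphism $\tilde\beta_n$ on $(\e,\beta_n)$ provided by \cite[Lemma B.4]{Baldi}, which was already used to define the conjugations \eqref{Phi}.
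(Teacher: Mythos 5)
Your proposal is correct and follows essentially the same route as the paper: an induction on the order of the remainder, using Lemma \ref{preliminary.o1} for the conjugation and Lemma \ref{preliminary.find.beta} for the homological equation at each step, followed by the final translation $u\mapsto u(t,x-mt)$ to cancel $m\pa_x$ and render the completely resonant profiles time-independent. Your explicit bookkeeping $\sum_k\e^k S_k(x+mt)$ is just a more granular version of the paper's $\e\la V\ra_m+\e^2\tilde Z_N$, and your remark on the $\e$-dependence of $f$ in the inductive application of Lemma \ref{preliminary.o1} correctly identifies and resolves the same point the paper handles implicitly.
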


\begin{remark}\label{always.ok}
    We remark that this first step can be performed for a generic $V \in C^\infty(\T^2; \R)$ and we still do not use the characterization of $\cV_{m, s}$ given in \eqref{Vs}. 
\end{remark}

\begin{proof}
Take $V \in C^\infty(\T^2; \R)$, $u(t,x)$ solution of \eqref{initial.transp.eq} and fix any $N \in \N^+$. We claim that there exists $\e_N(V)>0$ such that, for every $0< \e < \e_N(V)$ there exists a sequence of $2\pi$-periodic, linear and invertible transformations $\tilde{\Phi}_1(t), \ldots, \tilde{\Phi}_N(t)$ such that, defining $u_N^{(1)}$ as $u(t,x)= \tilde{\Phi}_1(t) \circ \ldots \circ \tilde{\Phi}_N(t) u_N^{(1)}(t,x)$, then $u_N^{(1)}(t,x)$ solves 
    \begin{equation}\label{eq.N}
    \begin{split}
        &\pa_t u_N^{(1)}=H_N(t) u_N^{(1)}, \\
        H_N(t):=\Opw(i\xi(m + \e &\la V\ra_m(x +mt) + \e^2 \tilde{Z}_{N}(t,x) + \e^{N+1} \hat{W}_N(t,x))),
    \end{split}
    \end{equation}
    where $\tilde{Z}_{N}(t,x)$ is real valued and completely resonant with respect to $m$ (see Definition \ref{resonant.average}), $\hat{W}_N \in C^\infty(\T^2; \R)$. 
    We stress that also in this case $\tilde{\Phi}_i(t) \equiv \tilde{\Phi}_{i, \e}(t)$, for all $i=1, \ldots, N$, but we omit the dependence on $\e$ to simplify the notation. Let us postpone the proof of \eqref{eq.N} and show first how to conclude the proof of the proposition: denote by $\cT_m$  the following  translation in space
    \begin{equation}\label{cU}
        [\cT_m u](t,x):= u(t, x -mt) , \quad \forall u \in C^\infty(\T^2), 
    \end{equation}
    and define the function $u_N$ as $u_N:= \cT_m  u_N^{(1)}$, where $u_N^{(1)}$ solves \eqref{eq.N}. From \eqref{eq.N} we see that $u_N$ solves
    \begin{equation}\label{quasi}
        \pa_t u_N = \Opw(i\xi(\e \la V \ra_m (x) + \e^2 \tilde{Z}_{N}(t, x-mt) + \e^{N+1} \hat{W}_N(t,x-mt))) u_N.
    \end{equation}
      Remark that, since $\tilde{Z}_{N}$ is completely resonant with respect to $m$ (see Definition \ref{resonant.average}), then $\tilde{Z}_{N}(t,x-m t)= \tilde{Z}_{N}(0, x)$. Thus defining $Z_N(x):= \tilde{Z}_{N}(0,x)$, and $\tilde{W}_N(t,x):= \hat{W}_N(t, x-mt)$, we obtain that $u_N$ solves \eqref{vorrei}, concluding the proof. Remark that $u=\Psi_{N, \e} u_N$, with  \begin{equation}\label{psi.N}
          \Psi_{N, \e}(t):= \tilde{\Phi}_1(t) \circ \ldots \circ \tilde{\Phi}_N(t) \circ \cT_m^{-1}. 
      \end{equation}
      We now turn to the proof of \eqref{eq.N}. To this aim, we use Lemma \ref{preliminary.o1} and proceed inductively. First consider the case $N=1$: denote by  
    \begin{equation}\label{H.1}
        H(t):= \Opw(i\xi( m +\e V(t,x))), 
    \end{equation}
    (so that equation \eqref{initial.transp.eq} reads $\pa_t u = H(t) u$) and apply Lemma \eqref{preliminary.o1} with $n=1$, $f=m$ and $g=V$: let $\beta \in C^\infty(\T^2; \R)$ to be fixed later and consider the associated transformation $\tilde{\Phi}_1(t)$ defined in \eqref{Phi} and define $u_1^{(1)}(t,x)$ as $u= \tilde{\Phi}_1(t)u_1^{(1)}$. Recall from previous discussion that $u_1^{(1)}(t,x)$ satisfies $\pa_t u_1^{(1)}= \tilde{\Phi}_1(t)_*H(t) u_1^{(1)}$. Using Lemma \ref{preliminary.o1} and the expression of the pushforward in \eqref{pushforward}, we have
    \begin{equation}\label{push.H1}
        \tilde{\Phi}_1(t)_*H(t)=\Opw(i\xi( m + \e ( V(t,x) + m \pa_x \beta(t,x) - \pa_t \beta(t,x)) + \e^{2} \cR_{1, \e}(t,x))),
    \end{equation}
    where $\cR_{1, \e}=\cR_{1, \e}^1+ \cR_{1, \e}^2$ is the sum of the remainders given by \eqref{exp.one1} and \eqref{exp.one2}. We now apply Lemma \ref{preliminary.find.beta} to fix $\beta$: there exists $\beta_{m, V}^1 \in C^\infty(\T^2; \R)$ such that 
    \begin{equation}\label{ecco.b}
        V(t,x)+ m \pa_x \beta_{m, V}^1(t,x) - \pa_t \beta_{m, V}^1(t,x)= \la V \ra_m(x+mt),
    \end{equation}
where we recall the definition of resonant average in \eqref{1d.average}. Thus $u_1^{(1)}(t,x)$ solves \eqref{eq.N} with $N \leadsto 1$, $\tilde{Z}_{1}=0$ and $\hat{W}_1(t,x)=\cR_{1, \e}(t,x)$. This concludes the proof of \eqref{eq.N} when $N=1$. Let us remark that we fix $\tilde\e_1(V):= \e_{\beta_{m, V}^1}>0$, where $\e_{\beta_{m, V}^1}$ is given by Lemma \ref{preliminary.o1} and is in turn fixed by the choice of $\beta^1_{m, V}$ in \eqref{ecco.b}.
  
Finally, we take any $0<n<N$ and consider equation \eqref{eq.N} with $N \leadsto n $. Remark that $H_n(t)$ in \eqref{eq.N} is a particular instance of \eqref{H.n} with $ f= m +\e \la V \ra_m(x+mt) + \e^2 \tilde{Z}_{n}(t, x)$ and $g(t,x)= \e \hat{W}_n(t,x)$. Thus we can apply Lemma \ref{preliminary.o1}: define $u_{n+1}^{(1)}(t,x)$ as $u_{n}^{(1)}:= \tilde{\Phi}_{n+1}(t) u_{n+1}^{(1)}(t,x)$,  where $\tilde{\Phi}_{n+1}(t)$ is given in \eqref{Phi}, (with $\beta(t,x)$ to be determined). Again $u_{n+1}^{(1)}$ solves $\pa_t u_{n+1}^{(1)}= \tilde{\Phi}_{n+1}(t)_*H_n(t)u_{n+1}^{(1)}$. From \eqref{exp.one1} and \eqref{exp.one2} we have
\begin{equation}\label{name1}
\begin{split}
    \tilde{\Phi}_{n+1}(t)_*H_n(t)u_{n+1}^{(1)} 
    &= \Opw(i\xi ( m + \e \la V \ra_m(x+mt) + \e^2 \tilde{Z}_{n}(t, x) \\
    &+ \e^{n+1}(\hat{W}_n(t,x) + m \pa_x \beta(t,x) - \pa_t \beta(t,x)) \\
    &+ \e^{n+2} \cR_{{n+1}, \e}(t,x))), 
\end{split}
\end{equation}
where $\cR_{{n+1}, \e}=\cR_{{n+1}, \e}^1+ \cR_{{n+1}, \e}^2$ is the sum of the remainders given by \eqref{exp.one1} and \eqref{exp.one2}. 
 We can now conclude as in the previous step: applying Lemma \ref{preliminary.find.beta}, we can find $\beta_{m, V}^n(t,x) \in C^\infty(\T^2; \R)$ such that 
    \begin{equation}\label{name2}
        \hat{W}_n(t,x) + m \pa_x \beta_{m, V}^n(t,x) - \pa_t \beta_{m, V}^n(t,x)= \la \hat{W}_n \ra_m (x +mt),
    \end{equation}
    (recall \eqref{1d.average}). Substituting \eqref{name2} in \eqref{name1} we obtain that $u_{n+1}^{(1)}(t,x)$ solves \eqref{eq.N} with $N \leadsto n+1$, $\tilde{Z}_{n+1} := \tilde{Z}_{n} +\e^{n+1} \la \hat{W}_{n} \ra_m (x+ mt)$, which is still completely resonant with $m$ (see Definition \ref{resonant.average}) and $\hat{W}_{n+1}(t,x):= \cR_{{n+1}, \e}$. Finally define $\tilde{\e}_n(V):= \e_{\beta^n_{m,V}}>0$ given by Lemma \ref{preliminary.o1}, in turn fixed by $\beta_{m, V}^n$. 
    This concludes the proof of the Lemma, choosing $\e_N(V):=\min \{\tilde{\e}_1(V), \ldots, \tilde{\e}_N(V) \}>0$.
\end{proof}

We are now in position to prove Proposition \ref{prop.const.coeff}. 

\begin{proof}[Proof of Proposition \ref{prop.const.coeff}.]
Fix $V \in \cV_{m, s}$,  $N \in \N^+$ and let $\e_N(V)>0$ and $\Psi_{N, \e}(t)$  be respectively the threshold and the transformation given by Lemma \ref{lemma.normal}. Recall that, defining $u_N$ as $u(t,x) =[\Psi_{N, \e}(t)u_N](t,x)$, then $u_N$ solves equation \eqref{vorrei}.
Let now 
\begin{equation}\label{Lambda}
    [\Lambda u](t,x):= ( 1 +\lambda'(x))^\frac12 u(t, x + \lambda(x)), \quad \forall u \in C^\infty(\T^2)
\end{equation}
where $\lambda \in C^\infty(\T;\R)$ has to be determined. We claim that it is possible to choose $\lambda \in C^\infty(\T; \R)$ such that, considering the corresponding transformation $\Lambda$ in \eqref{Lambda} and defining $u_N=\Lambda v_N$, where $u_N$ solves \eqref{vorrei}, then $v_N$ solves \eqref{const.order.N}. Moreover such transformation $\Lambda$ is bounded and invertible with inverse bounded in $\cH^s$ for every $s \in \R$. This would conclude the proof with $\Phi_{N, \e}(t):=\Psi_{N, \e}(t) \circ \Lambda$.

To prove our claim, we first remark that if $1+ \lambda'(x) \neq 0$ for any $x \in \T$, then $\Lambda$ in \eqref{Lambda} is invertible with inverse of the form 
\begin{equation}\label{inv.Lambda}
    [\Lambda^{-1} u](t, y):= (1 + \tilde\lambda '(y))^\frac12 u(t, y + \tilde{\lambda}(y)), \quad \forall u \in C^\infty(\T^2), \ (t, x) \in \T \times \T,
\end{equation}
for some $\tilde{\lambda} \in C^\infty(\T;\R)$. 
We now proceed to identify the claimed function $\lambda \in C^\infty(\T; \R)$ assuming that $1 + \lambda'(x) \neq 0$ and thus formally writing $\Lambda^{-1}$ as in \eqref{inv.Lambda}. We will check in the end that the condition $1 + \lambda'(x) \neq 0$ is satisfied.
Denote by 
\begin{equation}\label{omega}
\omega(t,x):= \e \la V \ra_m (x) + \e^2 Z_N(x) + \e^{N+1} \tilde{W}_N(t,x),
\end{equation}
where $Z_N$ and $\tilde{W}_N$ are given by Lemma \ref{lemma.normal}, so that equation \eqref{vorrei} reads $\pa_t u_N= \Opw(i\xi \omega(t,x)) u_N$.  Then from \eqref{pushforward} $v_N$ defined as $u_N= \Lambda v_N$ solves 
\begin{equation}
    \pa_t v_N= \Lambda^{-1} \Opw(i\xi \omega(t,x)) \Lambda v_N
\end{equation}
(recall \eqref{omega} and \eqref{Lambda} and notice that the transformation $\Lambda$ is time independent). Noticing that 
\begin{equation}
    \Opw(i\xi \omega(t,x))=\omega(t,x) \pa_x +\frac12 \pa_x(\omega(t,x))
\end{equation}
and performing a direct computation one can show that 
\begin{equation}\label{push.lambda}
\begin{split}
    \Lambda^{-1} \Opw(i\xi \omega(t,x)) \Lambda & = \omega(t, y)( 1+ \lambda'(y)) \vert_{y=x + \tilde{\lambda}(x)} \pa_x  + \frac12 \pa_x \left(\omega(t, y)( 1+ \lambda'(y))\vert_{y=x + \tilde{\lambda}(x)} \right)\\
    &\stackrel{\eqref{comp.symbols}}{=} \Opw( i\xi \omega(t, y)( 1+ \lambda'(y))\vert_{y=x + \tilde{\lambda}(x)}). 
\end{split}
\end{equation}
Recalling now the explicit expression of $\omega(t,x)$ in \eqref{omega}, we can rewrite 
\begin{equation}\label{push.lambda2}
\begin{split}
    \Lambda^{-1} \Opw(i\xi \omega(t,x)) \Lambda & \stackrel{\eqref{push.lambda}}{=} \Opw\left(i\xi( \e \la V \ra_m (y) + \e^2 Z_N(y))(1+\lambda'(y)) \vert_{y=x + \tilde{\lambda}(x)} \right) \\
    & + \e^{N+1}\Opw\left(\tilde{W}_N(t, y)(1+ \lambda'(y))\vert_{y=x +\tilde{\lambda}(x)} \right).
\end{split}
\end{equation}
We now find $\lambda \in C^\infty(\T)$ such that 
\begin{equation}\label{cond.const}
    ( 1 + \lambda'(x))( \la V \ra_m(x) + \e Z_N(x))= \hat{m}_N,
\end{equation}
for some $\hat{m}_N \in \R$. Notice  that, since $V \in \cV_{m, s}$, up to shrinking $\e_N(V)>0$ we can suppose that, for every $0< \e < \e_N(V)$, the function $\la V \ra_m (x) + \e Z_N(x)$ does not have any zero (recall \eqref{Vs}). Thus \eqref{cond.const} immediately implies $1 + \lambda'(x) \neq 0$ proving invertibility of $\Lambda$ in \eqref{Lambda}. Plugging \eqref{cond.const} in \eqref{push.lambda2} we obtain \eqref{const.order.N} with $W_N(t,x):= \tilde{W}_N(t, y)(1+ \lambda'(y))\vert_{y=x +\tilde{\lambda}(x)}$ concluding the proof. 

To find $\lambda$ satisfying \eqref{cond.const}, we first rewrite it as 
\begin{equation}\label{constant2}
    \lambda'(x) = \left(\frac{\hat{m}_N}{\la V \ra_m(x) +\e Z_N(x)}-1\right),
\end{equation}
using again that $\la V \ra_m + \e Z_N(x)$ has no zeroes.
Next, requiring $\lambda \in C^\infty(\T)$ and integrating equation \eqref{constant2} on $\T$ one finds 
\begin{equation}
    \hat{m}_N = 2 \pi\left( \int_0^{2\pi} (\la V \ra_m(x) + \e Z_N(x))^{-1}\di x \right)^{-1} \in \R.
\end{equation}
Remark that, since $V \in \cV_{m, s}$ and since $\la V \ra_m + \e Z_N(x)$ has no zeroes, then the function $(\la V \ra_m(x) + \e Z_N(x))^{-1}$ has constant sign, and thus $\hat{m}_N$ can not be zero. 

Finally, in order to find $\lambda(x)$, write its Fourier series as $\lambda(x)=\sum_{k \in \Z} \lambda_k e^{ikx}$, for all $x \in \T$. 
Next, let $f(x):= \left(\frac{\hat{m}_N}{\la V \ra_m(x) +\e Z_N(x)}-1\right)$ and write its Fourier series as $f(x)=\sum_{k \in \Z} f_{k} e^{ikx}$. Remark that, from \eqref{constant2}, $f(x)$ has zero average, i.e., $f_{0}=0$. 
 Thus \eqref{constant2} reads 
\begin{equation}\label{lambda.k}
    \lambda_{k} = \frac{f_{k}}{ik} \quad \forall k \neq 0, \quad \lambda_0=0.
\end{equation}
One can check that, since $f \in C^\infty(\T; \R)$, then $\lambda(x):=\sum_{k \in \Z} \lambda_{k} e^{ikx}$, with $\{\lambda_{k}\}_{k \in \Z}$ given by \eqref{lambda.k} is a well defined, real valued and smooth function as well. By construction $\lambda$ verifies \eqref{cond.const}, concluding the proof. 
\end{proof}

\section{Instability}\label{sec.instability}
In this section we prove the second point of Theorem \ref{main.thm}. As anticipated, the core of the proof is the construction of an escape function for $h(x, \xi)= \xi \la V \ra_m(x)$, with $V \in \cV_{m, u}$ (see \eqref{Vu}). This is done in Section \ref{sec.escape}, adapting the construction of \cite{Colin_de_Verdi_re_2020} (see also \cite{LangellaMasperoRotolo25}). Next, in section \ref{sec.energy.est} we prove existence of unstable solutions performing first a normal form reduction as the one in Lemma \ref{lemma.normal} and then a positive commutator estimate in which we exploit the properties of the escape function. 

\subsection{Construction of the escape function}\label{sec.escape}
In this section we consider a smooth function 
\begin{equation}\label{h}
    h(x, \xi):= \xi X(x), \quad (x, \xi) \in T^*\T \simeq \T \times \R, \ X \in C^\infty(\T), 
\end{equation}
we assume that that $X$ is non degenerate and has at least one zero (see Definition \ref{non.deg}) and we construct an \emph{escape function} for $h$. We denote by $\cX_h$ the Hamiltonian vector field of $h$ on $T^*\T$, that in coordinates reads 
\begin{equation}\label{Xh}
    \mathcal{X}_h(x, \xi)= ( X(x), - \xi \pa_x X(x)), \quad \forall (x, \xi) \in T^*\T,
\end{equation}
and we denote by $\Phi^t_{\cX_h}(x,\xi)$ the flow of $\cX_h$ on $T^*\T$. 
Finally, for every $f \in C^\infty(T^*\T)$ we write the Poisson brackets between  $h$ and $f$ as  
\begin{equation}\label{poisson.bracket}
    \{ h , f \} := \pa_\xi h \pa_x f - \pa_x h \pa_\xi f  .
\end{equation}    

We now give the definition of escape function. For all this section we will use the notation $T^* \T \setminus \{ 0 \} = \{ (x, \xi) \in T^*\T : \xi \neq 0 \}$.  
\begin{definition}\label{def:homo}
We say that a smooth function $f: T^*\T \setminus \{ 0 \} \to \R$ is positively homogeneous of degree $\rho$ if 
$$
f(x, \lambda \xi)= \lambda^{\rho}f(x, \xi), \quad \forall (x, \xi) \neq 0 ,  \ \forall \ \lambda > 0.
$$
\end{definition}

\begin{definition}[Escape function]\label{def:escape}
    Let $h: T^*\T \to \R$ be as in \eqref{h}. We say that a function $a \in C^\infty(T^*\T \setminus \{ 0 \})$, positively homogeneous of degree one (see Definition \ref{def:homo}) 
    is an {\em escape function} for $h$ if there exists some positive $\delta$ such that 
    \begin{equation}\label{Poiss.esc}
    \{h, a\} \geq \delta |\xi|, \quad \forall (x, \xi) \in T^*\T \setminus \{ 0 \}. 
    \end{equation}
\end{definition}
 The main results of this section is the following proposition. 
 
\begin{proposition}\label{escape1}
    Let $X \in C^\infty(\T)$ and suppose that $X$ has at least one zero and that all the zeroes are non degenerate (see Definition \ref{non.deg}). Then the function $h$ defined in \eqref{h} admits an escape function in the sense of Definition \ref{def:escape}. Moreover, there exists an open subset $\cW \subset \T$ such that 
    \begin{equation}\label{pos.a}
        a(x, \xi) \leq - \frac{|\xi|}{2}, \quad \forall x \in \cW.
    \end{equation}
\end{proposition}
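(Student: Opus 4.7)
The plan is to look for an escape function in the product form
\[
a(x,\xi)\;=\;f(x)\,|\xi|, \qquad (x,\xi)\in T^*\T\setminus\{\xi=0\},
\]
with $f\in C^\infty(\T;\R)$ still to be constructed. This ansatz is smooth on $T^*\T\setminus\{0\}$ and positively homogeneous of degree one, matching Definitions \ref{def:homo}--\ref{def:escape}. A direct computation of the Poisson bracket \eqref{poisson.bracket}, using that $\partial_\xi|\xi|=\operatorname{sgn}(\xi)$ and $\xi\,\operatorname{sgn}(\xi)=|\xi|$ on $\xi\neq 0$, gives
\[
\{h,a\}(x,\xi)\;=\;|\xi|\bigl(X(x)f'(x)-X'(x)f(x)\bigr).
\]
Thus the escape condition \eqref{Poiss.esc} reduces to constructing a smooth $f$ on $\T$ with
\[
W(x)\;:=\;X(x)f'(x)-X'(x)f(x)\;\geq\;\delta>0 \qquad \forall\,x\in\T,
\]
and similarly \eqref{pos.a} becomes the requirement that $f\leq -1/2$ on some open set $\cW\subset\T$.

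\textbf{Topological setup and local constraints.} Because $X$ is smooth and non-degenerate, its zeros $x_1<\dots<x_r$ are finite and isolated. At each zero $X$ changes sign (by non-degeneracy) and, since $X$ must return to its original sign after a full turn around the compact connected torus, $r=2N$ is even and the zeros alternate between sources ($X'(x_j)>0$) and sinks ($X'(x_j)<0$) of the flow $\dot x = X(x)$. Evaluating $W(x_j)\geq\delta$ at a zero gives $-X'(x_j)\,f(x_j)\geq\delta$, so that $f(x_j)<0$ at every source and $f(x_j)>0$ at every sink. In particular, once such $f$ is produced, rescaling it by any $\lambda>0$ preserves the strict positivity of $W$ and makes $|f(x_j)|$ as large as desired, so that choosing $\cW$ to be a small neighborhood of any source yields \eqref{pos.a} automatically.

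\textbf{Construction of $f$ and main obstacle.} On each open interval between consecutive zeros, $X$ has constant sign and never vanishes, and the inequality $W\geq\delta$ rewrites as $(f/X)'\geq \delta/X^2$; since $X^{-2}$ has a non-integrable singularity at each endpoint, any admissible $u:=f/X$ must diverge to $\pm\infty$ at the zeros. The idea is to integrate this explicitly on each arc, for instance setting $u(x):=\int_{m_j}^x \delta\,X(s)^{-2}\,ds$ for a fixed base point $m_j$ in the interior, and then to verify, via a Taylor expansion of $X$ near each $x_j$ together with the non-degeneracy $X'(x_j)\neq 0$, that the product $f=Xu$ extends continuously across each zero with finite limit $-\delta/X'(x_j)$, which carries precisely the sign demanded in the previous paragraph. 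The main obstacle is the smooth gluing across the zeros of $X$: the integral formula produces $f$ which is a priori only continuous, with logarithmic sub-leading corrections that obstruct $C^\infty$ regularity. I would resolve this by replacing $f$ near each zero with a smooth bump interpolating between the constant value $-\delta/X'(x_j)$ (for which one directly checks $W\approx -X'(x_j)\cdot(-\delta/X'(x_j))=\delta$ at $x_j$) and the interior integral formula, tuning the cutoff scale so that $W$ remains strictly positive on every overlap region; an alternative route, closer in spirit to \cite{Colin_de_Verdi_re_2020, LangellaMasperoRotolo25}, is a flow-box construction near each hyperbolic critical point of $\cX_h$, followed by a partition-of-unity glue.
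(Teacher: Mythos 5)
Your argument is correct in substance but follows a genuinely different route from the paper. The paper builds the escape function dynamically: a local candidate $k=\tilde k(x)|\xi|$ near the critical circles $\Gamma^\pm$ (Lemma \ref{lemma.local}), an extension $\ell^\pm$ to each basin obtained by averaging along the flow (the limit \eqref{def:ell+}, which stabilizes because conical neighborhoods of $\Gamma^+$ are positively invariant), and a final gluing of $\ell^+$ and $\ell^-$ by a cutoff $\eta_\sigma$ whose derivative along the flow is small (Lemma \ref{gluing}). You instead impose the product ansatz $a=f(x)|\xi|$ from the start and reduce \eqref{Poiss.esc} to the scalar ODE inequality $Xf'-X'f\ge\delta$ on $\T$, solved quasi-explicitly arc by arc; note that by the homogeneity of the flow (Remark \ref{homo.flow}) the paper's $a$ is in fact also of this product form, so the two constructions produce the same kind of object, and yours is more elementary and bypasses the flow-limit machinery, at the price of being strictly one-dimensional, whereas the paper's scheme is the one that generalizes to higher-dimensional attractor--repellor structures. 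Your reduction, the sign analysis at the zeros, the boundary value $\lim_{x\to x_j}X(x)\int^x\delta X^{-2}=-\delta/X'(x_j)$ (independent of the base point $m_j$, so the arcs match continuously), and the rescaling argument for \eqref{pos.a} are all correct. The only step you leave unproved is the smoothing across the zeros, and it does need to be made quantitative: writing $f=(1-\chi_r)f_{\mathrm{int}}+\chi_r f_{\mathrm{const}}$ with $\chi_r$ supported in $|x-x_j|\le 2r$ and $|\chi_r'|\lesssim r^{-1}$, the bracket picks up the cross term $X\chi_r'(f_{\mathrm{const}}-f_{\mathrm{int}})$, which on the transition region $r\le|x-x_j|\le 2r$ is $O(r)\cdot O(r^{-1})\cdot O(r\log(1/r))=O(r\log(1/r))$, since the Taylor expansion you invoke gives $f_{\mathrm{int}}-f_{\mathrm{const}}=O(|x-x_j|\log|x-x_j|)$; combined with $W[f_{\mathrm{const}}]=\delta X'(x)/X'(x_j)\ge\delta/2$ near $x_j$, this yields $W[f]\ge\delta/4$ everywhere for $r$ small enough, which closes the argument.
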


\paragraph{Dynamical properties.}
In order to prove Proposition \ref{escape1} we need some dynamical properties of the Hamiltonian flow induced by $h(x, \xi)$ in \eqref{h}, that we list in this section. First of all, we recall (see, for example, \cite[Chapter 9]{lee2003introduction}) that a smooth vector field is \emph{complete} if it generates a global flow, or equivalently if, for every initial datum, the associated flow exists for all in times. We have the following result, for whose proof we refer to \cite[Section 5.2]{Taira}. 

\begin{lemma}\label{global.ex}
    Let $h$ as in \eqref{h}. Then the Hamiltonian vector field $\cX_h$ (see \eqref{Xh}) is complete.
\end{lemma}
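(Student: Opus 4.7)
The plan is to exploit the triangular structure of the Hamilton equations for $h(x,\xi)=\xi X(x)$. Writing out $\cX_h$ from \eqref{Xh}, the flow $\Phi^t_{\cX_h}(x_0,\xi_0)=(x(t),\xi(t))$ is governed by the system
\begin{equation}
\dot x = X(x), \qquad \dot\xi = -\xi\, X'(x),
\end{equation}
in which the first equation decouples from the second. I would treat the two components in turn.

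First I would observe that the $x$-equation is an autonomous ODE on the compact manifold $\T$ driven by the smooth vector field $X(x)\partial_x$. Since a smooth vector field on a compact manifold without boundary is automatically complete (the classical escape-from-compacts criterion cannot trigger; see e.g.\ \cite{lee2003introduction}), the solution $x(t)$ with $x(0)=x_0$ exists for all $t\in\R$ and stays in $\T$.

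Next, having fixed such a global solution $t\mapsto x(t)$, the $\xi$-equation becomes the \emph{linear} scalar ODE
\begin{equation}
\dot\xi(t) = -X'(x(t))\,\xi(t),
\end{equation}
whose coefficient $t\mapsto -X'(x(t))$ is continuous and, since $X'\in C^\infty(\T)$ is bounded on the compact torus, uniformly bounded on $\R$. Hence Picard's theorem for linear ODEs provides the explicit global solution
\begin{equation}
\xi(t)=\xi_0\exp\!\left(-\int_0^t X'(x(s))\,ds\right),\qquad t\in\R.
\end{equation}
This is defined for every $t\in\R$, so the integral curve $(x(t),\xi(t))$ of $\cX_h$ through any initial point $(x_0,\xi_0)\in T^*\T$ is defined on all of $\R$, which is precisely completeness.

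There is no real obstacle here: the only point worth flagging is that even though $T^*\T\cong \T\times \R$ is non-compact (so the general compact-manifold argument does not apply directly to $\cX_h$), the structure $h=\xi X(x)$ makes the $\xi$-dynamics linear in $\xi$ with coefficient depending only on $x$, and this linearity together with the boundedness of $X'$ on $\T$ prevents blow-up of $\xi$ in finite time. No assumption on the non-degeneracy of the zeroes of $X$ is needed for completeness; that hypothesis will instead enter in the subsequent construction of the escape function.
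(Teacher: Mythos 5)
Your proof is correct and complete. The paper does not actually prove this lemma; it simply refers the reader to Taira's work for a proof, so there is no internal argument to compare against. Your elementary two-step argument is exactly the natural one: the $x$-component decouples into an autonomous ODE on the compact manifold $\T$, hence is globally defined, and the $\xi$-component then satisfies a linear scalar ODE with coefficient $-X'(x(t))$ bounded by $\sup_{\T}|X'|<\infty$, so the explicit exponential formula yields a global solution. You are also right to flag that non-compactness of $T^*\T$ in the fibre direction is the only potential issue and that the linearity in $\xi$ (equivalently, the homogeneity of degree one of $h$ in $\xi$) is what rules out finite-time blow-up; your remark that non-degeneracy of the zeroes of $X$ plays no role here is likewise accurate.
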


\begin{remark}\label{homo.flow}
    Denoting by $\Phi^t_{\cX_h}(x,\xi)=(X^t(x, \xi), \Xi^t(x, \xi))$ the components of the flow of $\cX_h$ in \eqref{Xh} with initial datum $(x, \xi) \in T^*\T$, the following homogeneity property holds:
    \begin{equation}\label{eq.homo.flow}
    (X^t(x, \lambda \xi), \Xi^t(x, \lambda \xi))= (X^t(x, \xi), \lambda \Xi^t(x, \xi)), \quad \forall \lambda \in \R,  \  \forall (x, \xi) \in T^*\T, \ \forall t \geq 0. 
    \end{equation}
\end{remark}

We now identify global attractors and repellors for the flow. In particular, we show that the dynamics of $\cX_h$ on $T^*\T$ is completely determined once the zeroes of $ X $  are known. To this aim let us first introduce the following terminology: 

\begin{definition}[Attractor-Repellor structure]\label{def:attractor}
We say that the flow $\Phi^t_{\cX_h}$ on $T^*\T$ has a \emph{global attractor-repellor} structure if there exist $A$, $R$ closed invariant subsets of $T^*\T$ such that:
\begin{equation}
    \begin{split}
        &\mbox{ for every } (x, \xi) \in T^*\T \setminus R, \quad \Phi^t_{\cX_h}(x, \xi) \to A \quad \mbox{as}  \quad t \to + \infty,\\
        &\mbox{ for every } (x, \xi) \in T^*\T \setminus A \quad \Phi^t_{\cX_h}(x, \xi) \to R \quad \mbox{as}  \quad t \to - \infty.\\ 
    \end{split}
\end{equation}
We denote the basins of attraction of $A$ and $R$ respectively as $B(A):=T^*\T \setminus (R)$ and $B(R):=T^*\T \setminus (A)$.
\end{definition}

\begin{lemma}\label{structure.flow}
    Let $h(x, \xi)=  \xi X(x)$ and suppose that $X$ has at least one zero, but does not have any degenerate zero (see Definition \ref{non.deg}).  Denote by 
    \begin{equation}\label{setK}
    K^+:=\{ x \in \T: X(x)=0,\ X'(x)< 0 \} \quad \mbox{ and} \quad K^-:=\{ x \in \T:X(x)=0,\  X'(x) >0 \}, 
\end{equation}
and by 
\begin{equation}\label{setGamma}
    \Gamma^+:=\{(x, \xi) : x \in K^+\} \simeq K^+ \times \R  \quad \mbox{ and } \quad \Gamma^-:=\{(x, \xi) : x \in K^-\} \simeq K^- \times \R.
\end{equation}
Then the sets $\Gamma^+$, $\Gamma^-$ and $\cT:= \{\xi=0\}$ are invariant for the flow of $\cX_h$ in \eqref{Xh}. Moreover the flow $\Phi^t_{\cX_h}$ has a global attractor-repellor structure (see Definition \ref{def:attractor}) with attractor $\Gamma^+$ and repellor $\Gamma^-$. In particular, for all $(x_0, \xi_0) \in T^*\T \setminus ( \Gamma^+ \cup \Gamma^-)$ we have
\begin{equation}\label{att.rep}
\Phi^t_{\cX_h}(x_0, \xi_0) \to \Gamma^+, \ \mbox{ as } t \to +\infty, \quad \mbox{ and } \quad \Phi^t_{\cX_h}(x_0, \xi_0) \to \Gamma^-, \ \mbox{ as } t \to -\infty. 
\end{equation}
\end{lemma}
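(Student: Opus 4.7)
The plan is to exploit the decoupled structure of the Hamiltonian $h(x,\xi) = \xi X(x)$: the vector field $\cX_h$ in \eqref{Xh} reads $\dot{x} = X(x)$, $\dot{\xi} = -\xi X'(x)$, so the base dynamics on $\T$ is autonomous and independent of $\xi$, while the fiber equation is linear in $\xi$. Invariance of $\cT = \{\xi = 0\}$ is immediate from the linearity of $\dot{\xi}$ in $\xi$, and invariance of $\Gamma^\pm = K^\pm \times \R$ follows because $x_0 \in K^\pm$ implies $X(x_0) = 0$, so $x(t) \equiv x_0$ while $\xi(t)$ stays in $\R$. Global existence is already guaranteed by Lemma \ref{global.ex}, so I can safely speak of the flow for all $t \in \R$.

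For the attractor--repellor structure, I would carry out a qualitative analysis of the one-dimensional autonomous ODE $\dot{x} = X(x)$ on $\T$. By non-degeneracy (Definition \ref{non.deg}) the zeros of $X$ are isolated, and compactness of $\T$ makes them a finite cyclically ordered set $x_1 < \ldots < x_p$ that partitions $\T$ into arcs on which $X$ has constant sign. On an arc where $X > 0$, comparing signs of $X$ on either side of each endpoint and invoking non-degeneracy forces the left endpoint to lie in $K^-$ (with $X' > 0$) and the right endpoint in $K^+$ (with $X' < 0$); symmetrically on arcs where $X < 0$. Thus the zeros \emph{alternate} between $K^-$ and $K^+$ around $\T$, and a point of $K^+$ is locally attracting, while a point of $K^-$ is locally repelling, for the 1D flow.

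It remains to conclude. Given $(x_0, \xi_0) \in T^*\T \setminus (\Gamma^+ \cup \Gamma^-)$, one has $x_0 \in \T \setminus (K^+ \cup K^-)$, hence $X(x_0) \neq 0$ and $x_0$ lies in the interior of some arc. By uniqueness of the ODE the trajectory $x(t)$ cannot cross a zero of $X$; since $\dot{x}$ has constant sign along its arc, $x(t)$ is strictly monotone and bounded, and therefore converges as $t \to +\infty$ to a limit in $\overline{\T}$ which, being stationary under the continuous flow, must itself be a zero of $X$; by monotonicity and the alternation of the previous paragraph this is the forward endpoint of the arc and lies in $K^+$. The same argument in reverse time yields convergence of $x(t)$ to a point of $K^-$, establishing \eqref{att.rep} since $\Gamma^\pm = K^\pm \times \R$ places no constraint on the (globally defined) $\xi$-coordinate. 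The main potential pitfall is purely bookkeeping—making sure the sign conventions in \eqref{setK} are aligned so that $K^+$ genuinely plays the role of the attractor rather than the repellor—and once this is done, no further dynamical difficulty arises in the one-dimensional setting.
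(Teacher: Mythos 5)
Your proposal is correct and follows the same route as the paper: the paper's proof likewise reduces invariance to the explicit form of $\cX_h$ and then dismisses \eqref{att.rep} as a consequence of the one-dimensional dynamics $\dot{x}=X(x)$, omitting the details. You simply fill in those details (finiteness and alternation of the zeros, monotonicity on the arcs, convergence to the forward endpoint in $K^+$), and your sign bookkeeping for $K^\pm$ is consistent with \eqref{setK}.
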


\begin{proof}
First of all remark that, since $X$ has no degenerate zeroes, then
\begin{equation}\label{all.zeroes}
    X^{-1}(0)=K^+ \cup K^-.
\end{equation}
Moreover, using the explicit expression of $\cX_h$ in \eqref{Xh}, one can check that $\cT$, $\Gamma^+$ and $\Gamma^-$ are the only invariant sets for the flow. We now prove \eqref{att.rep}: let $(x_0, \xi_0) \in T^*\T \setminus (\Gamma^+ \cup \Gamma^-)$ and recall that, from Lemma \ref{global.ex}, the flow $\Phi^t_{\cX_h}(x_0, \xi_0)$ exists for all $t \in \R$. Thus, in order to prove \eqref{att.rep} we have to show that $x(t) \to K^+$ as $t \to +\infty$ and $x(t) \to K^-$ as $t \to - \infty$ (see \eqref{setK} and \eqref{setGamma}). This follows immediately from the fact that the dynamics of $x(t)$ is determined by the one dimensional system $\dot{x}=X(x)$ (see \eqref{Xh}) and from the definitions of $K^+$ and $K^-$ in \eqref{setK}, thus we omit the details of this last part of the proof.
\end{proof}

\begin{remark}\label{rmk.inv}
    From the structure of the flow described in Lemma \ref{structure.flow} one can check that any conical neighborhood $\cU^+$ of $\Gamma^+$ is positively invariant for the flow (i.e., $\Phi^t_{\cX_h}(\cU^+) \subset \cU^+$, for all $t>0$) and analogously any conical neighborhood of $\Gamma^-$ is invariant for the flow at negative times. 
\end{remark}

\subsubsection{Construction of the escape function}
In this section we use the structure of the flow of $\cX_h$ (see \eqref{Xh}) described in Lemma \ref{structure.flow} and the sets defined therein (\eqref{setK},\eqref{setGamma}) to construct an escape function for $h(x, \xi)$ in the sense of Definition \ref{def:escape}. We first construct a local escape function close to the zeroes of $X$ and then we progressively extend it to the whole space. 

\begin{lemma}\label{lemma.local}
    Let $h(x, \xi)=\xi X(x)$ and suppose that $X \in C^\infty(\T)$ has at least one zero and is non degenerate (see Definition \ref{non.deg}). There exist conical neighborhoods $\cU^\pm$ of $\Gamma^\pm$ (see \eqref{setGamma}), $\delta>0$ and a smooth function $k\in C^\infty(T^*\T \setminus \{ 0\})$ such that 
    \begin{equation}\label{kappa}
     \quad k(x, \xi)= \pm |\xi| \ \mbox{ on } \Gamma^\pm, \quad \mbox{ and } \quad \{h, k\} \geq \delta |\xi|, \quad \forall (x, \xi) \in \cU^+ \cup \cU^-.  
    \end{equation}
    Moreover the function $k$ is positively homogeneous of degree one (see Definition \ref{def:homo}) on its domain.
\end{lemma}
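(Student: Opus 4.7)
The plan is to build $k$ in the separable form $k(x,\xi) = \psi(x)|\xi|$, where $\psi \in C^\infty(\T;\R)$ is a cutoff that equals $+1$ near $K^+$ and $-1$ near $K^-$; the homogeneity requirement then comes for free from $|\xi|$, and the Poisson bracket computation collapses to a one-dimensional estimate on $\T$ thanks to a clean cancellation.

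First I would note that because $X$ is non-degenerate on the compact torus, the sets $K^+$ and $K^-$ defined in \eqref{setK} are finite and disjoint closed subsets of $\T$. A standard partition of unity then yields disjoint open neighborhoods $V^\pm \subset \T$ of $K^\pm$ and a function $\psi \in C^\infty(\T;\R)$ with $\psi \equiv 1$ on $V^+$ and $\psi \equiv -1$ on $V^-$. Define
\[
k(x,\xi) := \psi(x)|\xi|, \qquad (x,\xi) \in T^*\T \setminus \{0\}.
\]
Since $|\xi|$ is smooth on $\R \setminus \{0\}$, we have $k \in C^\infty(T^*\T \setminus \{0\})$; it is positively homogeneous of degree one in $\xi$ in the sense of Definition~\ref{def:homo}; and by construction $k = \pm|\xi|$ on $\Gamma^\pm$, so the first conclusion in \eqref{kappa} holds.

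The key computation is the Poisson bracket. Using \eqref{poisson.bracket}, \eqref{h}, and $\partial_\xi |\xi| = \mathrm{sgn}(\xi)$ on $\{\xi \neq 0\}$, I get
\[
\{h,k\}(x,\xi) = X(x)\psi'(x)|\xi| - \xi X'(x)\psi(x)\,\mathrm{sgn}(\xi) = |\xi|\bigl(X(x)\psi'(x) - X'(x)\psi(x)\bigr),
\]
so the right-hand side is exactly $|\xi|$ times a function of $x$ alone. At a point $x_0 \in K^+$ one has $X(x_0) = 0$, $\psi(x_0) = 1$ and $X'(x_0) < 0$, giving $\{h,k\}(x_0,\xi) = -X'(x_0)|\xi| > 0$; at $x_0 \in K^-$ one has $X(x_0) = 0$, $\psi(x_0) = -1$ and $X'(x_0) > 0$, giving $X'(x_0)|\xi| > 0$. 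By continuity and finiteness of $K^\pm$, after possibly shrinking $V^\pm$ I obtain $\delta > 0$ with $X(x)\psi'(x) - X'(x)\psi(x) \geq \delta$ on $V^+ \cup V^-$. Setting $\cU^\pm := \{(x,\xi) \in T^*\T : x \in V^\pm, \ \xi \neq 0\}$, which are conical neighborhoods of $\Gamma^\pm$, completes the proof.

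The only subtlety I anticipate is making sure no smoothness is lost where $|\xi|$ is non-differentiable, but this is not an issue since the domain of the escape function is by definition $T^*\T \setminus \{0\}$ with $\{0\}$ being the zero section, as stated just above Definition~\ref{def:homo}. The rest is a direct verification using that the one-dimensional character of the problem makes the ansatz $k = \psi(x)|\xi|$ essentially forced, whereas in higher dimensions — as the authors recall when referring to \cite{Colin_de_Verdi_re_2020, LangellaMasperoRotolo25} — a much more delicate geometric construction of $\psi$ would be required.
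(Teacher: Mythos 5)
Your proposal is correct and follows essentially the same route as the paper: the separable ansatz $k(x,\xi)=\psi(x)|\xi|$ with $\psi\equiv\pm 1$ near $K^{\pm}$, the reduction of the Poisson bracket to the one-dimensional quantity $X\psi'-X'\psi$, evaluation at the (finitely many, non-degenerate) zeroes, and a continuity argument to obtain the uniform lower bound $\delta|\xi|$ on shrunken conical neighborhoods. No gaps.
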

\begin{proof}
    Denote by $\{x_1, \ldots, x_n\}$ the zeroes of $X(x)$ and, using non-degeneracy of $X$, let $\nu:= \min_i  |X'(x_i)| >0$. Let $\tilde{\cU}^\pm \subseteq \T$  be neighborhoods of $K^\pm$ in $\T$ (see \eqref{setK}) to be fixed later, and let $\tilde{k} \in C^\infty(\T)$ be such that 
    \begin{equation}\label{localk}
         \tilde{k}(x)=
    \begin{cases}
        1 & \mbox{ if } x \in \tilde{\cU}^+\\
        -1 & \mbox{ if } x \in \tilde{\cU}^-.
    \end{cases}
    \end{equation}
    Set $k(x, \xi):= |\xi| \tilde k(x)$. Clearly $k$ is positively homogeneous of degree one and $k=\pm|\xi|$ on $\Gamma^\pm$. Moreover, computing $\{ h, k \} (x_i)$ for any $i \in \{1, \ldots, n \}$ we also have
    $$
    \{ h, k\}(x_i, \xi)= -|\xi| X'(x_i) \tilde{k}(x_i) = |\xi| |X'(x_i)| \geq |\xi| \nu, \quad \forall i=1, \ldots, n,
    $$
    where for the last equality we have used \eqref{localk} and the definition of $K^\pm$ in \eqref{setK}. 
    Thus, up to choosing $\tilde{\cU}^\pm$ small enough, we have by continuity 
    $
    \{h, k \} \geq \frac{\nu}{2}|\xi| \mbox{ on } \cU^+ \cup \cU^-
    $
    (where $\cU^\pm := \tilde{\cU}^\pm \times \R$), concluding the proof of \eqref{kappa} with $\delta:=\frac{\nu}{2}$. 
\end{proof}

The next goal is to extend the local escape function $k$ to the basins of attractions $B(\Gamma^\pm):=T^*\T \setminus \Gamma^\mp$ (see Definition \ref{def:attractor}). To this aim we first need the following auxiliary function, which extends the Poisson brackets $\{h, k\}$, defined on $\cU^\pm$, to a positive function defined on the whole $T^*\T$.

\begin{lemma}\label{lemma.m}
    There exists a smooth function $\tm:  T^*\T \setminus \{ 0 \} \to \R$, positively homogeneous of degree one, such that 
    \begin{equation}\label{fun.m}
        \tm(x, \xi)=\{h, k\}(x, \xi), \ \forall (x, \xi) \in \cU^\pm, \quad \mbox{ and } \quad \tm(x, \xi) \geq \frac{\delta}{2}|\xi|, \quad \forall \ (x, \xi) \in T^*\T,
    \end{equation}
    where $k$, $\delta$ and $\cU^\pm$ are as in Lemma \ref{lemma.local}.
\end{lemma}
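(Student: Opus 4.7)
The plan is to glue the Poisson bracket $\{h,k\}$, which satisfies the sharp lower bound $\{h,k\}\geq \delta|\xi|$ on $\cU^+\cup\cU^-$ by Lemma~\ref{lemma.local}, with the harmless homogeneous model $\delta|\xi|$ via a smooth cut-off in the base variable. Because $\{h,k\}$ need not be positive (nor even large) outside $\cU^+\cup\cU^-$, the cut-off must be supported in a slightly enlarged set where a quantitatively weaker lower bound still holds.

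First I would enlarge the region on which the Poisson bracket dominates $|\xi|$. Since $h(x,\xi)=\xi X(x)$ and $k(x,\xi)=\tilde k(x)|\xi|$ are both positively homogeneous of degree one in $\xi$, so is their Poisson bracket, and therefore the ratio $(x,\xi)\mapsto \{h,k\}(x,\xi)/|\xi|$ is homogeneous of degree zero and continuous on $T^*\T\setminus\{\xi=0\}$, i.e. really a continuous function of $(x,\mathrm{sgn}\,\xi)$. The strict bound $\{h,k\}\geq\delta|\xi|$ on the open set $\cU^+\cup\cU^-$ therefore extends by continuity to a lower bound $\{h,k\}\geq\tfrac{\delta}{2}|\xi|$ on an open conical set $\cV\subset T^*\T$ that strictly contains the closure of $\cU^+\cup\cU^-$.

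Next I would pick a smooth cut-off $\chi\in C^\infty(\T;[0,1])$ depending only on $x$ (so as to commute with $\xi$-homogeneity), with $\chi\equiv 1$ on $\tilde\cU^+\cup\tilde\cU^-$ and with $\mathrm{supp}\,\chi$ contained in the projection of $\cV$ to $\T$; such a $\chi$ exists by a standard bump function argument on the compact manifold $\T$. I then set
\[
\tm(x,\xi) := \chi(x)\{h,k\}(x,\xi) + (1-\chi(x))\,\delta|\xi|,
\]
which is smooth on $T^*\T\setminus\{0\}$ (since $|\xi|$ is smooth there) and positively homogeneous of degree one in $\xi$. The identity $\tm=\{h,k\}$ on $\cU^\pm$ is automatic from $\chi\equiv 1$ on $\tilde\cU^\pm$, while a three-case check — on $\cU^\pm$, outside the support of $\chi$, and on the transition region where both terms are bounded below by $\tfrac{\delta}{2}|\xi|$ thanks to the previous step — yields $\tm\geq\tfrac{\delta}{2}|\xi|$ everywhere.

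The only delicate ingredient is the strict enlargement in the first step: the cut-off argument requires $\cV$ to contain the closure of $\cU^+\cup\cU^-$ strictly, otherwise there is no room for a smooth transition of $\chi$. This is precisely what the gap between $\delta$ and $\delta/2$, together with continuity of the degree-zero function $\{h,k\}/|\xi|$, buys; after that everything is a routine cut-off and case-analysis argument.
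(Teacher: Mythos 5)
Your proof is correct and follows essentially the same route as the paper: both reduce to the base variable using homogeneity and glue the Poisson bracket $\{h,k\}$ (which dominates $\delta|\xi|$ on $\cU^\pm$ and hence, by continuity of the degree-zero ratio $\{h,k\}/|\xi|$, dominates $\tfrac{\delta}{2}|\xi|$ on a slightly larger conical neighborhood) with the model $\delta|\xi|$ by a cut-off in $x$. Your version merely makes explicit, via the convex combination $\chi\{h,k\}+(1-\chi)\delta|\xi|$, the smooth positivity-preserving extension that the paper asserts without detail.
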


\begin{proof}
    Let $\cU^\pm$ be the conical neighborhoods of $\Gamma^\pm$  given by Lemma \ref{lemma.local}. Denote by $\tilde{\cU}^\pm \subset \T$ the neighborhoods of $K^\pm$ obtained projecting $\cU^\pm$, i.e. 
    $
    \tilde{\cU}^\pm:=\{ x \in \T : \ (x, \xi) \in \cU^\pm, \ \forall \xi \in \R \}. 
    $
    Then it is possible to define $\tilde{\tm}: \T \to \R$, a smooth function satisfying 
    \begin{equation}\label{t.m.positive}
    \tilde{\tm}(x)=\{h, k\}(x, 1) \quad \forall \ x \in (\overline{\cU^+}\cup \overline{\cU^-}) \quad \mbox{ and } \tilde{\tm} \geq \frac{\delta}{2} \quad \forall \ x \in \T.
\end{equation}
    This can be done since, from \eqref{kappa}, it possible to extend $\{h, k \}(x, 1)$ in a smooth way outside $\overline{\tilde{\cU}^\pm}$, preserving positivity as in \eqref{t.m.positive}. 
Finally, we define $\tm : T^*\T  \setminus \{0\} \to \R$ extending $\tilde{\tm}$ by homogeneity, i.e. $m(x, \xi):=\tilde{m}(x)|\xi|$, for all $(x, \xi) \in T^*\T$, obtaining that $\tm$ is a positively homogeneous function of degree one and satisfies \eqref{fun.m}. 
\end{proof}

We can now extend the local escape functions to the sets $B(\Gamma^\pm):=T^*\T \setminus \Gamma^\mp$ (see Definition \ref{def:attractor}). In the next Lemma we write $z:=(x, \xi) \in T^*\T$ for simplicity in the notation.

\begin{lemma}\label{escape.basins}
    Let $z=(x, \xi) \in B(\Gamma^\pm)$ and define 
\begin{equation}\label{def:ell+}
    \ell^\pm(z):=\lim_{t \to \pm \infty}\Big[k(\Phi_{\cX_h}^t(z)) - \int_0^t \tm(\Phi_{\cX_h}^{s}(z))\di s \Big],
\end{equation}
where $k$ is given by Lemma \ref{lemma.local} and $\tm$ by Lemma \ref{lemma.m}. 
Then
\begin{itemize}
     \item[(i)] $\ell^\pm$ are well defined on $B(\Gamma^\pm)$, smooth, positively homogeneous of degree one and $\ell^\pm=k$ on $\Gamma^\pm$; 

    \item[(ii)] $\ell^\pm$ are escape functions on $B(\Gamma^\pm)$. In particular, let $\delta>0$ be given by Lemma \ref{lemma.local}, then 
    \begin{equation}\label{ell.poiss}
    \{ h, \ell^\pm\} \geq \frac{\delta}{2}|\xi|, \quad \forall z \in B(\Gamma^\pm).
    \end{equation}
\end{itemize}
\end{lemma}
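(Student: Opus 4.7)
My plan hinges on the observation from Lemma \ref{lemma.m} that $\tm$ agrees with $\{h, k\}$ throughout the conical neighborhoods $\cU^\pm$. Introducing the auxiliary function
\begin{equation*}
F^+(t, z) := k(\Phi^t_{\cX_h}(z)) - \int_0^t \tm(\Phi^s_{\cX_h}(z))\, \di s,
\end{equation*}
a direct differentiation yields $\pa_t F^+(t, z) = \{h, k\}(\Phi^t_{\cX_h}(z)) - \tm(\Phi^t_{\cX_h}(z))$, which vanishes whenever $\Phi^t_{\cX_h}(z) \in \cU^+$. Thus once the trajectory enters $\cU^+$ the function $F^+(\cdot, z)$ is constant in $t$, and the limit defining $\ell^+(z)$ in \eqref{def:ell+} is actually attained in finite time rather than being a genuine limit; this is the structural fact that drives the whole argument.

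For part (i), take any $z \in B(\Gamma^+) = T^*\T \setminus \Gamma^-$. Lemma \ref{structure.flow} ensures $\Phi^t_{\cX_h}(z) \to \Gamma^+$ as $t \to +\infty$, so since $\cU^+$ is an open neighborhood of $\Gamma^+$ there exists $t_0(z) < +\infty$ with $\Phi^{t_0(z)}_{\cX_h}(z) \in \cU^+$; by positive invariance (Remark \ref{rmk.inv}) the trajectory remains in $\cU^+$ for all subsequent times, so $\ell^+(z) = F^+(t_0(z), z)$ is well defined. Smoothness follows from a local uniformity argument: around any fixed $z_0 \in B(\Gamma^+)$, continuity of the flow in initial data provides an open neighborhood $U \ni z_0$ and a time $T > 0$ such that $\Phi^T_{\cX_h}(U) \subset \cU^+$; on $U$ we then have $\ell^+ = F^+(T, \cdot)$, which is smooth as a composition of smooth maps. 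Positive homogeneity of degree one is inherited from the homogeneity of $k$ and $\tm$ through the flow identity \eqref{eq.homo.flow}. Finally, if $z \in \Gamma^+ \subset \cU^+$ the trajectory lies in $\cU^+$ for all $t \geq 0$, so $F^+(t, z) \equiv F^+(0, z) = k(z)$, giving $\ell^+ = k$ on $\Gamma^+$.

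For part (ii), I will compute $\{h, \ell^+\}$ through a cocycle identity derived from the flow property. Using $\Phi^{s+t}_{\cX_h} = \Phi^s_{\cX_h} \circ \Phi^t_{\cX_h}$ and the change of variable $\tau = \sigma + t$ in the integral appearing in $\ell^+(\Phi^t_{\cX_h}(z))$ gives
\begin{equation*}
\ell^+(\Phi^t_{\cX_h}(z)) = \ell^+(z) + \int_0^t \tm(\Phi^\sigma_{\cX_h}(z))\, \di \sigma.
\end{equation*}
Differentiating at $t = 0$ yields $\{h, \ell^+\}(z) = \tm(z)$, and \eqref{ell.poiss} follows immediately from the lower bound on $\tm$ in Lemma \ref{lemma.m}. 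The argument for $\ell^-$ is identical after reversing the direction of time, using that $\cU^-$ is negatively invariant.

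The step I expect to require the most care is the smoothness statement: a priori the entry time $t_0(z)$ into $\cU^+$ could blow up as $z$ approaches the repeller $\Gamma^-$, which would prevent a global uniform choice of $T$. The issue is circumvented by working purely locally around a fixed $z_0 \in B(\Gamma^+)$, where $z_0$ is separated from $\Gamma^-$, so that continuous dependence of the flow supplies a uniform time $T$ on a whole neighborhood of $z_0$; no global control of $t_0$ is needed.
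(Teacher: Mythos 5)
Your proof is correct and follows essentially the same route as the paper's: finite-time stabilization of the defining expression once the orbit enters the positively invariant cone $\cU^+$, local uniformity of the entry time for smoothness, and the identity $\{h,\ell^+\}=\tm$ for the escape estimate. The only cosmetic difference is that you obtain $\{h,\ell^+\}=\tm$ by differentiating the cocycle identity for $\ell^+\circ\Phi^t_{\cX_h}$, while the paper differentiates the finite-time local formula directly; both computations are equivalent.
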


\begin{proof}
We prove the statement with +.\\
    Let $z \in B(\Gamma^+)$ and let $t_0(z)  \geq 0$ be such that $\Phi_{\cX_h}^{t_0(z)}(z) \in \cU^+$, with $\cU^+$ the neighborhood of $\Gamma^+$ defined in Lemma \ref{lemma.local}. By Definition \ref{def:attractor} of $B(\Gamma^+)$, $t_0(z)$ is finite. We show that for any  $t>t_0$, the argument of the limit in \eqref{def:ell+} is constant.
    Since the set $\cU^+$ is positively invariant for the flow of $\cX_h$ (see Remark \ref{rmk.inv}), we have
$\tm(\Phi_{\cX_h}^t(z))= \{ h, k\}(\Phi_{\cX_h}^t(z))$  for any  $ t\geq t_0(z)$ (recall \eqref{fun.m}).
Therefore, for $t\geq t_0(z)$ one has
\begin{align}
    \notag
        & k(\Phi_{\cX_h}^t(z))-\int_{0}^t \tm(\Phi_{\cX_h}^s(z))\di s
        \notag
        = k(\Phi_{\cX_h}^t(z))-\int_0^{t_0(z)} \tm(\Phi_{\cX_h}^s(z))\di s-\int_{t_0(z)}^t \{h, k\}(\Phi_{\cX_h}^s(z))\di s\\
        \notag
        = & k(\Phi_{\cX_h}^t(z))-\int_0^{t_0(z)} \tm(\Phi_{\cX_h}^s(z))\di s- k(\Phi_{\cX_h}^t(z))+ k(\Phi_{\cX_h}^{t_0(z)}(z))\\
        \label{ell2}
        = & -\int_0^{t_0(z)} \tm(\Phi_{\cX_h}^s(z))\di s + k(\Phi_{\cX_h}^{t_0(z)}(z)),
    \end{align}
    and thus the argument of the limit is definitely constant, proving that $\ell^+$ is well defined. Moreover, if $z \in \Gamma^+$, then $t_0(z)=0$ and \eqref{ell2} reads $\ell^+(z)=k(z)$.

\noindent \underline{Smoothness:} 
fix $z_0 \in B(\Gamma^+)$ and let $V_{z_0} \subset B(\Gamma^+)$ be a neighborhood of $z_0$. We observe that, from the global picture of the flow of $\cX_h$ in Lemma \ref{structure.flow} and Remark \ref{rmk.inv}, there exists $t_1(V_{z_0})>0,$ possibly larger than $t_0(z_0),$ such that $ \Phi^{t}_{\cX_h}(z) \in \cU^+$ for all $z \in V_{z_0}$ and $t \geq t_1(V_{z_0})$. 
Then,  by \eqref{ell2}, on the set $V_{z_0}$ the function $\ell^+$ reduces to
\begin{equation}\label{ell.piu.v}
\ell^+(z)=  -\int_0^{t_1(V_{z_0})} \tm(\Phi_{\cX_h}^s(z))\di s + k(\Phi_{\cX_h}^{t_1(V_{z_0})}(z)) , \quad \forall z \in V_{z_0}  \ ,
\end{equation}
which is smooth.

\noindent \underline{Homogeneity:}
Using remark \ref{homo.flow} and recalling the homogeneity properties of $\tm$ and $k$ given in Lemma \ref{lemma.m} and \ref{lemma.local} respectively, then one can directly check from the definition of $\ell^+$ in \eqref{def:ell+} that $\ell^+(x, \lambda \xi)= \lambda \ell^+(x, \xi)$, for all $\lambda>0$, proving that $\ell^+$ is positively homogeneous of degree one.  \\

\noindent $(ii)$
Given $z_0 \in B(\Gamma^+)$ and $V_{z_0} \subset B(\Gamma^+)$ a small neighborhood of $z_0$, let $t_1(V_{z_0})$ as above. We have 
\begin{equation*}
\begin{split}
        \{h, \ell^+\}(z) 
        &\stackrel{\eqref{ell.piu.v}}{=} \{h, k \circ \Phi_{\cX_h}^{t_1(V_{z_0})}\}(z)-
        \int_0^{t_1(V_{z_0})} \{ h, \tm \circ \Phi_{\cX_h}^s(z)) \} \di s\\
        &= \{h, k  \}(\Phi_{\cX_h}^{t_1(V_{z_0})}(z))
        -\frac{\di}{\di t}\Big|_{t=0}\int_0^{t_1(V_{z_0})} \tm(\Phi_{\cX_h}^{t+s}(z)) \di s  \\
        &= \{h, k  \}(\Phi_{\cX_h}^{t_1(V_{z_0})}(z)) -\tm(\Phi_{\cX_h}^{t_1(V_{z_0})}(z))+ \tm(z)\\
        &= \tm(z) \stackrel{\eqref{fun.m}}{\geq} \frac{\delta}{2} |\xi|,
\end{split}
\end{equation*}
concluding the proof. 
\end{proof}

Finally we conclude the construction gluing together the functions $\ell^+$ and $\ell^-$ of Lemma \ref{escape.basins}. 

\begin{lemma}\label{gluing}
    Given $\sigma>0$ sufficiently small, there exists a smooth function $\eta_\sigma:\T \to \R$, satisfying $\eta_\sigma(K^+)\equiv 1, \ \eta_\sigma(K^-)\equiv 0$, and such that 
    \begin{equation}\label{bound.eta}
            \left| \frac{\di}{\di t}\vert_{t=0} \eta_\sigma[X^t(x, \xi)]\right| < \sigma, \quad \forall (x, \xi) \in \T \times \R. 
    \end{equation}
    where we recall the notation in Remark \ref{homo.flow} for the components of  the flow of $\cX_h$. 
\end{lemma}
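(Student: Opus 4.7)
The plan is to first simplify the condition \eqref{bound.eta} by an explicit computation. By Hamilton's equations \eqref{Xh}, $\frac{\di}{\di t}\vert_{t=0} X^t(x,\xi) = X(x)$, which is independent of $\xi$, so by the chain rule
\begin{equation}
\frac{\di}{\di t}\Big\vert_{t=0}\eta_\sigma[X^t(x,\xi)] = \eta_\sigma'(x)\, X(x), \quad \forall (x,\xi) \in \T \times \R.
\end{equation}
Thus the statement reduces to producing a smooth function $\eta_\sigma:\T \to \R$, equal to $1$ on $K^+$ and to $0$ on $K^-$, whose product $\eta_\sigma'(x) X(x)$ is pointwise bounded by $\sigma$ in absolute value. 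In particular the condition becomes $\xi$-independent, and the inequality is vacuous at the zeros of $X$, so I only need to control the transition across each arc of $\T \setminus (K^+ \cup K^-)$.

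Since $X$ has only non-degenerate zeros, it changes sign at each of them, so going around $\T$ the zeros alternate between $K^+$ (attractors, $X'<0$) and $K^-$ (repellors, $X'>0$), and $\T \setminus (K^+ \cup K^-)$ is a finite disjoint union of open arcs, each having one endpoint in $K^+$ and one in $K^-$. On such an arc $I$ I would introduce the flow-time coordinate
\begin{equation}
T(x) := \int_{x_0}^x \frac{\di y}{X(y)}, \qquad x \in I,
\end{equation}
with $x_0 \in I$ fixed. Because $1/X$ has a non-integrable simple-pole singularity at each endpoint and $X$ has constant sign on $I$, the map $T$ is a smooth diffeomorphism of $I$ onto $\R$ that sends the $K^+$-endpoint to $+\infty$ and the $K^-$-endpoint to $-\infty$.

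Now I would set $\eta_\sigma\vert_I(x) := F(T(x))$, where $F \in C^\infty(\R;[0,1])$ is a fixed smooth step profile with $F\equiv 0$ on $(-\infty,-L]$, $F\equiv 1$ on $[L,+\infty)$, and $\sup_{s}|F'(s)| < \sigma$; such an $F$ exists provided $L > 1/\sigma$, for instance by mollifying the characteristic function of $[-L,L]$. A direct computation gives
\begin{equation}
\eta_\sigma'(x)\,X(x) = F'(T(x))\,T'(x)\,X(x) = F'(T(x)),
\end{equation}
so the required pointwise bound $|\eta_\sigma'(x)X(x)| < \sigma$ holds on each arc. Since $F$ is locally constant near $\pm\infty$, the function $\eta_\sigma$ is identically $1$ on a one-sided neighborhood of each $K^+$-endpoint and identically $0$ on a one-sided neighborhood of each $K^-$-endpoint, so it extends smoothly to all of $\T$ by the constant values $1$ on a neighborhood of $K^+$ and $0$ on a neighborhood of $K^-$, the boundary values being compatible across the different arcs.

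I do not foresee a serious obstacle. The main conceptual step is the change of variables to the flow-time coordinate $T$, which absorbs the weight $X(x)$ into a Jacobian factor and reduces the inequality to the manifestly achievable $|F'|<\sigma$. The only delicate point is smoothness at the zeros of $X$, but this comes for free because $F$ is locally constant outside a compact set and $T$ maps small neighborhoods of the zeros to neighborhoods of $\pm\infty$.
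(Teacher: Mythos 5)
Your proof is correct and follows essentially the same route as the paper: the paper's function $\tilde t(x)$ (time to flow from the boundary of a neighborhood of $K^-$ to $x$) is exactly your flow-time coordinate $T$ up to an additive constant on each arc, and the paper's slowly varying profile $\phi_\sigma$ plays the role of your $F$. The computation $\eta_\sigma'(x)X(x)=F'(T(x))$ and the smooth extension by local constancy near $K^\pm$ match the paper's argument.
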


\begin{proof}
Recalling the definition of $K^-$ in \eqref{setK},
let $\tilde{\cU}^- \subseteq \T$ be an open neighborhood of $K^-$. Using again the dynamics of $X^t(x, \xi)$ (see \eqref{Xh}), we deduce that for any $x \notin K^+ \cup K^-$ there exists a unique $t_x \in \R$ such that $X^{-t_x}(x, \xi) \in \pa \tilde{\cU}^-$ (see Remark \ref{rmk.inv}).  Define then 
\begin{equation}\label{tilde.t}
    \tilde{t}(x):=
    \begin{cases}
        t_x, & x \in \T \setminus (K^+ \cup K^-) \\
        + \infty, & x \in K^+\\
        -\infty, & x \in K^-.\\
    \end{cases}
\end{equation}
Remark that for any $t \in \R$ one has 
\begin{equation}\label{property.t}
    \tilde{t}(X^t(x, \xi))=\tilde{t}(x)+t. 
\end{equation}

Next take a smooth non-decreasing function $\phi_\sigma: \R \to \R$, such that 
\begin{equation}\label{phi.e}
    \phi_\sigma(\tau):=
    \begin{cases}
        0, & \tau \leq -\sigma\\
        \sigma \tau, & \sigma < \tau \leq \sigma^{-1}-\sigma\\
        1, & \tau > \sigma^{-1}+\sigma, \\ 
    \end{cases}
    \quad \mbox{ and } \quad \sup_{\tau \in \R}|\phi'_\sigma(\tau)| \leq 10 \sigma. 
\end{equation}
We now prove that the function $\eta_\sigma$ claimed by the lemma is  $\eta_\sigma(x):=\phi_{\frac{\sigma}{10}}(\tilde{t}(x))$, for all $ x \in \T$, with $\tilde{t}$ defined in \eqref{tilde.t} and $\phi_{\frac{\sigma}{10}}$ in \eqref{phi.e}. 
First of all notice that, from the definition of $\tilde{t}$ in \eqref{tilde.t}, if $x \in K^+$ then
$
\eta_\sigma(x)=1
$ 
and analogously if $x \in K^-$, then $\eta_\sigma(x)=0$.
Next notice that, for any $(x, \xi) \in \T \times \R$ we have: 
$$
\left|\frac{\di}{\di t}\vert_{t=0} \eta_\sigma[X^t(x, \xi)]\right|=\left|\frac{\di}{\di t}\vert_{t=0} \phi_{\frac{\sigma}{10}}(\tilde{t}(X^t(x, \xi)))\right|\stackrel{\eqref{property.t}}{=}\left|\frac{\di}{\di t}|_{t=0}\phi_{\frac{\sigma}{10}}(\tilde{t}(x)+ t)\right|=\left|\phi'_{\frac{\sigma}{10}}(\tilde{t}(x))\right|\stackrel{\eqref{phi.e}}{<} \sigma.
$$
This proves point $(ii)$ and concludes the proof of the Lemma.
\end{proof}

We can now conclude the proof of Proposition \ref{escape1}.  
\begin{proof}[Proof of Proposition \ref{escape1}]
Define, for $\sigma >0$ sufficiently small and to be fixed later, 
    \begin{equation}
        a(x, \xi):=\eta_\sigma(x)\ell^+(x, \xi)+ (1-\eta_\sigma)\ell^-(x, \xi),
    \end{equation}
    with $\eta_\sigma$ defined in Lemma \ref{gluing} and $\ell^+$ and $\ell^-$ in Lemma \ref{escape.basins}. First of all notice that $a$ is well defined, since 
    $\eta_\sigma(x) =0$ for all $(x, \xi) \in \Gamma^-$, where $\ell^+$ is not defined and analogously $(1-\eta_\sigma)(x)=0$ for all $(x, \xi) \in \Gamma^+$ where $\ell^-$ is not defined.
   Next remark that $a$ is smooth in $T^*\T \setminus \{ 0 \}$, since both $\ell^\pm$ and $\eta_\sigma$ are and, from the homogeneity properties of $\eta_\sigma$ and $\ell^\pm$, we infer that $a$ is positively homogeneous of degree one. 
   
   Next we show property \eqref{pos.a}. We remark that $a(x, \xi) = -|\xi|$ on $\Gamma^-$: indeed, $\eta_\sigma=0$ on $K^-$ from Lemma \ref{gluing} and putting together \eqref{kappa} and point $(i)$ of Lemma \ref{escape.basins}, we get $\ell^-(x, \xi)= k(x, \xi)=-|\xi|$ on $\Gamma^-$. Thus from continuity of $a$ there exists $\cW$ neighborhood of $K^-$ such that \eqref{pos.a} holds for all $x \in \cW$.
   
   We are left to prove \eqref{Poiss.esc}. 
   We have
   \begin{equation}\label{h.a}
       \{h, a \} = \{ h, \eta_\sigma\}\ell^+ + \{h, \ell^+\}\eta_\sigma + \{h, \ell^-\}(1-\eta_\sigma)-\{h, \eta_\sigma\}\ell^- \stackrel{\eqref{ell.poiss}}{\geq} \frac{\delta}{2}|\xi| + \{h, \eta_\sigma\}(\ell^+-\ell^-).
   \end{equation}
    Finally, remark that
    $$
    \{h, \eta_\sigma\}(x, \xi)=X(x) \eta_{\sigma}'(x)= \frac{\di}{\di t}\vert_{t=0} \eta_{\sigma}(X^t(x, \xi)),
    $$
    thus using \eqref{bound.eta} we have $|\{h, \eta_\sigma\} | < \sigma$ and this gives
    \begin{equation}
        \{h, a\} \stackrel{\eqref{h.a}}{\geq} \frac{\delta}{2}|\xi| - |\{h, \eta_\sigma\}| |\xi| \underbrace{\max_{\T} \left| (\ell^+-\ell^-)\left(x, \frac{\xi}{|\xi|}\right)\right|}_{=:C} = |\xi|\left(\frac{\delta}{2} - C \sigma\right),
    \end{equation}
where for this last inequality we have used homogeneity of $\ell^\pm$. In conclusion, choosing $\sigma< \frac{\delta}{4C}$ we obtain \eqref{Poiss.esc} with $\delta \leadsto \frac{\delta}{4}$. 
\end{proof}

\subsection{Energy estimate}\label{sec.energy.est}

We are now in position to prove the second point of Theorem \ref{main.thm}. To do so, we need to first transform equation \eqref{initial.transp.eq} in a new transport equation, having $\la V \ra_m(x)$ as leading transport term (recall Definition \ref{resonant.average}). 
Precisely, given any $V \in \cV_{m, u}$, we apply Lemma \ref{lemma.normal} with $N=1$ (see Remark \ref{always.ok}): there exists $\e_1(V)>0$ and for every $0< \e < \e_1(V)$ a $2\pi$-periodic, linear transformation $\Psi_{1,\e}(t)$, such that, given $u(t,x)$ solution of equation \eqref{initial.transp.eq} and defining $u_1(t,x)$ as $u(t,x)=\Psi_{1,\e}(t) u_1(t,x)$, then $u_1(t,x)$ solves 
\begin{equation}\label{normal.instab}
    \pa_t u_1= \Opw(i \xi (\e \la V \ra_m(x) + \e^2 W(t,x)))u_1(t,x),
\end{equation}
for some $W \in C^\infty(\T^2;\R)$. Moreover, the transformation $\Psi_{1,\e}(t): \cH^s \to \cH^s$ is bounded for any $s \in \R$ with bounded inverse. We now show that there exists $u_{1_0} \in C^\infty(\T^2)$ such that, if $u_1(t,x)$ is the solution of \eqref{normal.instab} satisfying $u_1(0,x)=u_{1_0}(x)$, then for every $s>1$ we have
\begin{equation}\label{growth.u1}
    \norm{u_1(t)}_s \geq \tilde{\delta}_1 e^{ \e \delta_2 t} \norm{u_{1_0}}_s, \quad \forall t \in \R,
\end{equation}
for some $\tilde{\delta}_1, \delta_2>0$. Remark that, from boundedness of the transformation $\Psi_{1,\e}$ and of its inverse, this implies inequality \eqref{cond.growth}, concluding the proof. Thus in the rest of this section we prove \eqref{growth.u1}. 

\paragraph{Energy Estimate.}
In order to prove \eqref{growth.u1} first recall that, since $V \in \cV_{m, u}$, we can apply  Proposition \ref{escape1} to $h(x,\xi)=\xi \la V\ra_m(x)$: denote by $a(x, \xi) \in C^\infty(T^*\T \setminus \{ 0 \} )$ the escape function for $h(x,\xi)$ constructed in Proposition \ref{escape1}. We start by associating a symbol $\tilde{a} \in \bS^1(\T)$ (see \eqref{symbol}) to the escape function: take $\chi \in C_c^\infty(T^*\T ;[0, 1])$, satisfying $\chi(x, \xi) = 1$ for all $|\xi| \leq \frac12$ and $\chi(x, \xi)=0$ for all $|\xi| \geq 1$ and define 
\begin{equation}\label{tilde.a}
    \tilde{a}(x, \xi) = (1- \chi)(x, \xi) a(x, \xi), \quad \forall (x, \xi) \in \T \times \R. 
\end{equation}
Clearly $\tilde{a} \in C^\infty(T^*\T)$ and $\tilde{a}= a$ on $|\xi| \geq 1$. Using the homogeneity of $a$ one can check that $\tilde{a} \in \bS^1$. We now use a virial argument like in \cite{MasperoMurgante}:  given any $u_1(t,x)$ solution of \eqref{normal.instab}, we define 
\begin{equation}\label{At}
    \cA(t):= \la \Opw(-\tilde a) u_1(t), u_1(t) \ra_{L^2(\T)} \quad \forall t \in \R.
\end{equation}

We have the following result:
\begin{lemma}\label{lemma.At}
    Let $u_1(t,x)$ be a solution of equation \eqref{normal.instab} with initial condition $u_1(0,x)=u_{1_0}(x)$. There exist constants $\delta_2, \beta_{\e, W, \tilde{a}}>0$ such that 
\begin{equation}\label{ineq.At}
    \dot\cA(t) \geq \e \left( \delta_2 \cA(t) -\beta_{\e, W, \tilde{a}} \norm{u_{1_0}}^2_{L^2(\T)} \right), \quad \forall t \in \R,
\end{equation}
where $\cA(t)$ is defined in \eqref{At}. 
\end{lemma}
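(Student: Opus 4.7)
The plan is to differentiate $\cA(t)$, reduce the derivative to a commutator pairing, extract the principal symbol via the symbolic calculus of Lemma \ref{symbolic.calculus}, and finally invoke G\aa{}rding's inequality \eqref{eq.garding} to convert the positivity $\{h,\tilde{a}\} \geq \delta |\xi|$ (with $h(x,\xi)=\xi \la V\ra_m(x)$) into an operator estimate.

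\medskip

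\noindent\textbf{Step 1 (differentiation).} Since $\tilde{a}$ is real, $\Opw(-\tilde a)$ is self-adjoint by Lemma \ref{symbolic.calculus}, and since the symbol of $H(t):=\Opw(i\xi(\e\la V\ra_m+\e^2 W(t,x)))$ is purely imaginary, $H(t)$ is skew-adjoint on $L^2(\T)$. Differentiating \eqref{At} and using $\pa_t u_1=H(t)u_1$, I would obtain
\[
\dot{\cA}(t)=\la [\Opw(-\tilde a),H(t)]u_1(t),u_1(t)\ra_{L^2(\T)}.
\]

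\noindent\textbf{Step 2 (symbolic expansion of the commutator).} Splitting $H(t)=\e\Opw(ih)+\e^2\Opw(i\xi W(t,x))$, with $h=\xi\la V\ra_m(x)$, I would apply Lemma \ref{symbolic.calculus}, point 4, to both commutators. Since $\tilde a,h,\xi W\in\bS^1$, the Poisson-bracket formula \eqref{comm.symbols} gives
\[
[\Opw(-\tilde a),H(t)] \;=\; \e\,\Opw\!\bigl(\{h,\tilde a\}+\e\{\xi W,\tilde a\}\bigr)+\e R_1+\e^2 R_2,
\]
where $R_1,R_2\in\cS^{0}$ are bounded on $L^2(\T)$ uniformly in $t\in\T$ (by Lemma \ref{symbolic.calculus}, point 1). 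Denote the principal symbol by $b(t,x,\xi):=\{h,\tilde a\}+\e\{\xi W,\tilde a\}\in\bS^1$.

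\medskip

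\noindent\textbf{Step 3 (positivity of the principal symbol).} Using Proposition \ref{escape1} and the fact that $\tilde a\equiv a$ on $\{|\xi|\geq 1\}$, together with the homogeneity of $a$ (hence the uniform bound $|\tilde a(x,\xi)|\leq C_a\la\xi\ra$ and similar bounds for $\pa_x\tilde a,\pa_\xi\tilde a$), I would check that for $\e$ sufficiently small,
\[
\{h,\tilde a\}(x,\xi)\geq \delta|\xi|\quad\text{on }|\xi|\geq 1,
\qquad
\e|\{\xi W,\tilde a\}|\leq \tfrac{\delta}{2}|\xi| \ \text{ on }|\xi|\geq 1,
\]
while both symbols stay bounded on $|\xi|\leq 1$. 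Choosing $\delta_2>0$ so small that $\delta_2 C_a<\delta/4$, one then has
\[
b(t,x,\xi)+\delta_2 \tilde a(x,\xi)\geq \tfrac{\delta}{4}|\xi| - M
\]
for some constant $M>0$, so that $b+\delta_2\tilde a+M\in\bS^1$ is non-negative.

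\medskip

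\noindent\textbf{Step 4 (G\aa{}rding).} Applying the G\aa{}rding inequality \eqref{eq.garding} to the non-negative symbol $b+\delta_2\tilde a+M$ (of order $\rho=1$), I would deduce
\[
\la \Opw(b)u_1(t),u_1(t)\ra\geq \delta_2\la\Opw(-\tilde a)u_1(t),u_1(t)\ra - C\,\|u_1(t)\|_{L^2}^2 = \delta_2\,\cA(t)-C\,\|u_1(t)\|_{L^2}^2,
\]
for some $C>0$ depending on $\tilde a,W,\e$. Combining with Step 2 and the $L^2$-boundedness of $R_1,R_2$ yields
\[
\dot{\cA}(t)\geq \e\delta_2\,\cA(t)-\e\,\tilde\beta_{\e,W,\tilde a}\,\|u_1(t)\|_{L^2}^2.
\]

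\medskip

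\noindent\textbf{Step 5 (conservation of the $L^2$-norm).} Since the right-hand side of \eqref{normal.instab} is skew-adjoint on $L^2(\T)$, the $L^2$-norm of $u_1(t)$ is preserved: $\|u_1(t)\|_{L^2}=\|u_{1_0}\|_{L^2}$ for all $t\in\R$. Plugging this in yields \eqref{ineq.At} with $\beta_{\e,W,\tilde a}:=\tilde\beta_{\e,W,\tilde a}$.

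\medskip

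The only delicate point is Step 3--4: one must ensure that the order-1 correction coming from the $\e^2$-part of $H(t)$ does not destroy the positivity provided by the escape function. This is the reason for taking $\e$ small (already required by Lemma \ref{lemma.normal}), and for choosing $\delta_2$ small enough so that the homogeneity constant of $\tilde a$ does not ruin the lower bound before applying G\aa{}rding.
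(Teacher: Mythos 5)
Your proposal is correct and follows essentially the same route as the paper: differentiate $\cA$, expand the commutator by symbolic calculus into $\e\Opw(\{h,\tilde a\})$ plus the $\e^2$-term and order-zero remainders, use the escape-function bound $\{h,a\}\geq\delta|\xi|$ together with the homogeneity bound $|\tilde a|\lesssim|\xi|$ and G\aa{}rding to recover $\delta_2\cA(t)$ up to an $L^2$-controlled error, and conclude by conservation of the $L^2$-norm. The only (cosmetic) difference is that you apply G\aa{}rding once to the combined non-negative symbol $b+\delta_2\tilde a+M$, whereas the paper applies it twice, first to $(\{h,a\}-\e M_W|\xi|)(1-\chi_1)$ and then to pass from $\Opw((1-\chi_1)|\xi|)$ back to $-\Opw(\tilde a)$.
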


Before proving Lemma \ref{lemma.At} let us remark that, integrating inequality \eqref{ineq.At} one obtains 
\begin{equation}\label{A.1}
    \cA(t) \geq e^{\e \delta_2 t} \left(\cA(0) - \frac{\beta_{\e, W, \tilde{a}}}{\delta_2} \norm{u_{1_0}}_{L^2(\T)}^2 \right).
\end{equation}
On the other hand, using Cauchy-Schwarz inequality and property \eqref{e.cv}, we find $C_{\tilde{a}}>0$ such that
\begin{equation}\label{A.2}
    \cA(t) \stackrel{\eqref{At}}{\leq} \norm{ \Opw(-\tilde{a}) u_1(t)}_{L^2(\T)}\norm{u_1(t)}_{L^2(\T)} \stackrel{\eqref{e.cv}}{\leq} C_{\tilde{a}} \norm{u_1(t)}_{\cH^1} \norm{u_{1_0}}_{L^2(\T)}, 
\end{equation}
where in the last inequality we used that, for any solution $u_1(t,x)$ of \eqref{normal.instab} we have conservation of the $L^2$-norm of the solution, i.e., $\norm{u_1(t)}_{L^2(\T)} = \norm{u_{1_0}}_{L^2(\T)}$, for every $t \in \R$, due to skewadjointness of the operator $\Opw(i\xi (\e \la V \ra_m(x) + \e^{2} W(t,x)))$ (see \eqref{normal.instab}. 
Thus putting together \eqref{A.1} and \eqref{A.2} we have 
\begin{equation}\label{A.3}
    \norm{u_1(t)}_{\cH^1} \geq \left( C_{\tilde{a}} \norm{ u_{1_0}}_{L^2(\T)}\right)^{-1} e^{\e \delta_2 t } \left( \cA(0)- \frac{\beta_{\e, W, \tilde{a}}}{\delta_2} \norm{u_{1_0}}_{L^2(\T)}^2 \right).
\end{equation}
In the next paragraph we will show how to find $u_{1_0}\neq 0$ such that 
\begin{equation}\label{A.4}
    \left( \cA(0)- \frac{\beta_{\e, W, \tilde{a}}}{\delta_2} \norm{u_{1_0}}_{L^2(\T)}^2 \right)=:c_0 >0.
\end{equation}
This would conclude the proof of \eqref{growth.u1} for $s=1$ since using \eqref{A.4}, then \eqref{A.3} gives  \eqref{growth.u1}, with $\tilde\delta_1:= c_0 (C_{\tilde{a}} \norm{u_{1_0}}_{L^2(\T)})^{-1}>0$. Finally remark that $\norm{u}_s \geq \norm{u}_1$ for every $s\geq1$, thus exponential blow up of the Sobolev norm of the
solutions to \eqref{normal.instab} holds for any $s \geq 1$.
We now prove Lemma \ref{lemma.At}. 
\begin{proof}[Proof of Lemma \ref{lemma.At}]
    First of all remark that, computing the time derivative of $\cA(t)$ in \eqref{At} and using skewadjointness of $\Opw(i\xi( \e \la V \ra_m(x) + \e^2 W(t,x)))$ in \eqref{normal.instab}, we have
    \begin{equation}\label{dA}
        \dot{\cA}(t) = \la [ \Opw(-\tilde{a}), \Opw(i \xi (\e \la V \ra_m(x) + \e^2 W(t,x)))] u_1, u_1 \ra_{L^2(\T)}.
    \end{equation}
    Next, using results of symbolic calculus we have 
    
\begin{equation}
\begin{split}\label{b2}
    [ \Opw(-\tilde{a}), \Opw(i \xi (\e \la V \ra_m(x) + \e^2 W(t,x)))] & \stackrel{\eqref{comm.symbols}}{=} \Opw( \{ -\tilde{a},  \xi ( \e \la V \ra_m(x) + \e^2 W(t,x))\}) + \e \cB(t) \\
    & = \e \Opw(\{ \xi \la V \ra_m(x), \tilde{a} \}) + \e^2 \Opw(\{ \xi W(t,x) , \tilde{a} \} ) + \e \cB(t), 
\end{split}
    \end{equation}
    with $\cB(t) \in C^\infty(\T, \cS^{0})$ bounded remainder resulting from \eqref{comm.symbols}. Since $\tilde{a} \in \bS^1$, then $\{ \tilde{a}, \xi W(t,x) \} \in C^\infty(\T, \bS^1)$ (see \eqref{comm.symbols}). In particular, denoting by $M_W:= \max_{|\xi|=1} \left|\left\{\xi  W(x), \tilde{a}(x, \xi)\right\}\right| \geq 0$ one has
    \begin{equation}\label{b1}
     \Opw( \{ \xi W(t,x), a \}) \geq -M_W \Opw(|\xi|).
    \end{equation}
  Moreover, from  boundedness of $\cB(t)$ there exists $C_{\cB}>0$ such that 
    \begin{equation}\label{b.b}
        \la \cB(t) u_1, u_1 \ra_{L^2(\T)} \geq - C_{\cB} \norm{u_1}_{L^2(\T)}^2,  \quad \forall t \in \R. 
    \end{equation}
    Recalling again that $\norm{u_1(t,x)}_{L^2(\T)}= \norm{u_{1_0}(x)}_{L^2(\T)}$ and plugging \eqref{b.b}, \eqref{b1} and \eqref{b2} in \eqref{dA} we get
    \begin{equation}\label{b3}
        \dot{\cA}(t) \geq  \e \la \Opw( \{ \xi \la V \ra_m(x), \tilde{a} \} - \e M_W |\xi|) u_1, u_1 \ra_{L^2(\T)} - \e  C_{\cB} \norm{u_{1_0}}_{L^2(\T)}^2, \quad \forall t \in \R.
    \end{equation}
    Thus we are left to show that there exists $\delta_2, \tilde{C}_{\tilde{a}}>0$ such that 
    \begin{equation}\label{b4}
        \la \Opw( \{ \xi \la V \ra_m(x), \tilde{a} \} - \e M_W |\xi| )u_1, u_1 \ra_{L^2(\T)} \geq \delta_2 \cA(t)- \tilde{C}_{\tilde{a}} \norm{u_{1_0}}_{L^2(\T)}^2, \quad \forall t \in \R
    \end{equation} 
        Indeed, putting together \eqref{b4} and \eqref{b3}, we get \eqref{ineq.At} with $\beta_{\e, W, \tilde{a}}:= C_\cB +  \tilde{C}_{\tilde{a}}$. To prove \eqref{b4}, let $\chi_1 \in C_c^\infty(\T \times \R; [0, 1])$ be such that $\chi_1 = 1 $ if $|\xi|\leq 1$ and $\chi_1 =0$ if $|\xi| \geq 2$ and write 
        \begin{equation}
        \begin{split}
            \Opw(\{ \xi \la V \ra_m(x), \tilde{a} \} - \e M_W |\xi|) & = \Opw((\{ \xi \la V \ra_m(x), \tilde{a} \} - \e M_W |\xi|)(1-\chi_1))\\
            &+ \Opw((\{ \xi \la V \ra_m(x),\tilde{a} \}  - \e M_W |\xi|)\chi_1).
        \end{split}
        \end{equation}
        Remark that, by \eqref{tilde.a}, $\{ \xi \la V \ra_m(x), \tilde{a} \}(1-\chi_1)= \{ \xi \la V \ra_m(x), a \}(1-\chi_1)$. Moreover, from point 6 of Lemma \ref{symbolic.calculus} we have $\Opw((\{ \xi \la V \ra_m(x),\tilde{a} \} -\e M_W |\xi|) \chi_1) \in \cS^\rho$, for any $\rho <0$. In particular there exists $C_{\chi_1}>0$ such that 
    \begin{equation}\label{bb}
    \begin{split}
        \la \Opw( \{ \xi \la V \ra_m(x), \tilde{a} \} -\e M_W |\xi| )u_1, u_1 \ra_{L^2(\T)} & \geq \la \Opw((\{ \xi \la V \ra_m(x), a \}- \e M_W|\xi|) (1-\chi_1)) u_1, u_1 \ra_{L^2(\T)}\\
        & - C_{\chi_1} \norm{u_1}_{L^2(\T)}^2.
        \end{split}
    \end{equation}
        
        We exploit now \eqref{Poiss.esc} and we apply G\aa{}rding inequality:
        \begin{equation}\label{b5}
        \la \Opw( (\{ \xi \la V \ra_m(x), a \} -\e M_W|\xi|)(1-\chi_1)) u_1, u_1 \ra \stackrel{\substack{\eqref{Poiss.esc}\\ \eqref{eq.garding}}}{\geq} (\delta -\e M_W) \la \Opw((1-\chi_1)|\xi|)u_1, u_1 \ra -C_{G,1} \norm{u_1}_{L^2(\T)}^2, 
        \end{equation}
        where $C_{G,1}>0$ is the constant resulting from G\aa{}rding inequality and $\delta>0$ is given by \eqref{Poiss.esc}. Recalling now the definition of $\tilde{a}$ in \eqref{tilde.a} and using the fact that $a$ is positively homogeneous of degree one, there exists $C_{2, a}>0$ such that $a(x, \xi) \geq -C_{2,a} |\xi|$ for all $(x, \xi) \in \T \times \R$. Thus, applying again G\aa{}rding inequality, we obtain:  
        \begin{equation}\label{b6}
        \begin{split}
            \la \Opw((1-\chi_1)|\xi|) u_1, u_1 \ra & \geq -\frac{1}{C_{2,a}} \la\Opw( (1- \chi_1) a) u_1, u_1 \ra -C_{G,2} \norm{u_1}_{L^2(\T)}^2 \\
            & \geq -\frac{1}{C_{2,a}} \la \Opw( \tilde{a}) u_1, u_1 \ra - (C_{G,2} + C_{K}) \norm{u_1}_{L^2}^2, 
        \end{split}
        \end{equation}
        where $C_{G,2}>0$ is the constant resulting from G\aa{}rding inequality and $C_K>0$ results from the compact correction to pass from $\Opw((1- \chi_1)a)$ to $\Opw(\tilde{a})$.  

        Finally, plugging \eqref{b6} and \eqref{b5} in \eqref{bb} and choosing $0< \e < \frac{\delta}{2M_W}$ we obtain \eqref{b4} with $\tilde{C}_{\tilde{a}}=C_{G,1} + C_{G,2} + C_K + C_{\chi_1}>0$ and $\delta_2:=\frac{\delta}{2C_{2,a}}>0$ and concluding the proof. 
\end{proof}
\paragraph{Initial datum.} 
In this section we prove that there exists $u_{1_0} \in C^\infty(\T)$, such that $\norm{u_{1_{0}}}_{L^2(\T)}=1$ and
\begin{equation}\label{final}
    \la \Opw(-\tilde{a}) u_{1_0}, u_{1_0} \ra_{L^2(\T)} - \frac{\beta_{\e, W, \tilde{a}}}{\delta_2} >0,
\end{equation}
where the constants $\delta_2, \beta_{\e, W, \tilde{a}}$ are given by Lemma \ref{lemma.At}.
This gives \eqref{A.4} and, as discussed above, concludes the proof of the second point of Theorem \ref{main.thm}. 
To prove \eqref{final} we recall the property \eqref{pos.a} of the escape function, and we define $\chi_3 \in C_c^\infty(\T; [0,1])$, such that $\text{supp}(\chi_3) \subset \cW$, with $\cW$ given in \eqref{pos.a}. Next we define the family of functions 
\begin{equation}
    u_{\xi_0}(x):= \frac{\chi_3(x)}{\norm{\chi_3}_{L^2(\T)}} e^{i \xi_0 x}, \quad \forall x \in \T.
\end{equation}
We claim that there exist constants $C_\chi, C_r>0$ such that
\begin{equation}\label{done}
    \la \Opw(-\tilde{a}) u_{\xi_0}
, u_{\xi_0} \ra _{L^2(\T)}\geq C_\chi |\xi_0| - C_r.
\end{equation}
Thus choosing $\xi_0$ big enough, \eqref{done} proves \eqref{final} with $u_{1_0}=u_{\xi_0}$. To prove \eqref{done} first remark that, using symbolic calculus results in Lemma \ref{symbolic.calculus} we have 
\begin{equation}\label{c0}
     \la \Opw(-\tilde{a}) u_{\xi_0}
, u_{\xi_0} \ra _{L^2(\T)} \stackrel{\eqref{comp.symbols}}{=} \frac{1}{\norm{\chi_3}^2_{L^2(\T)}} \la \Opw(-\tilde{a} \chi_3^2) e^{i \xi_0 x}, e^{i \xi_0 x} \ra_{L^2(\T)} + \la \Opw(r_0)u_{\xi_0}, u_{\xi_0} \ra_{L^2(\T)} ,
\end{equation}
where $r_0 \in \bS^0(\T)$ results from the remainders in the pseudodifferential operations. Next, recalling the definition of $\tilde{a}$ in \eqref{tilde.a} and using \eqref{pos.a} we obtain 
\begin{equation}\label{c1}
\begin{split}
\la \Opw(-\tilde{a} \chi_3^2) e^{i \xi_0 x}
, e^{i \xi_0 x} \ra _{L^2(\T)} & 
\geq \la  \Opw\left( \frac{|\xi|}{2}(1-\chi)\chi_3^2  \right)  e^{i \xi_0 x}, e^{i \xi_0 x} \ra_{L^2(\T)} - C_g \\
& = \la \Opw \left(\frac{|\xi|}{2}(1-\chi) \right) \chi_3 e^{i \xi_0 x }, \chi_3 e^{i \xi_0 x} \ra_{L^2(\T)} - C_g + \la \Opw( r_1) e^{i \xi_0 x}, e^{i \xi_0 x} \ra_{L^2(\T)},
\end{split}
\end{equation}
where $C_g>0$ is the constant resulting from the G\aa{}rding inequality and where for the last equality we used again Lemma \ref{symbolic.calculus} and thus $r_1 \in \bS^0(\T)$ results from the remainders in the pseudodifferential operations. Finally, using the definition of Weyl quantization and performing a direct computation one obtains
\begin{equation}\label{cc}
    \la \Opw \left(\frac{|\xi|}{2}(1-\chi) \right) \chi_3 e^{i \xi_0 x }, \chi_3 e^{i \xi_0 x} \ra_{L^2(\T)} \stackrel{\eqref{tilde.a}}{\geq} 
    \int_{|\xi| \geq 1} |\xi| \left( \int_{\T^2} \chi_3(y) \overline{\chi_3(x)} e^{i(\xi - \xi_0)(x-y)}\di y \di x\right) \di \xi. 
\end{equation}
Using the Fourier series of $\chi_3$ given by $\chi_3(x)= \sum_{k \in \Z} c_k e^{ikx}$, one can check that 
\begin{equation}\label{delta.dir}
    \int_{\T^2} \chi_3(y) \overline{\chi_3(x)} e^{i(\xi - \xi_0)(x-y)}\di y \di x = (2\pi)^2 \sum_{k \in \Z} |c_k|^2 \delta(\xi- \xi_0 - k),
\end{equation}
where $\delta(\cdot)$ denotes the dirac-delta. Thus plugging \eqref{delta.dir} in \eqref{cc}, we obtain
\begin{equation}\label{last}
\begin{split}
    \la \Opw \left(\frac{|\xi|}{2}(1-\chi) \right) \chi_3 e^{i \xi_0 x }, \chi_3 e^{i \xi_0 x} \ra_{L^2(\T)}  & \geq (2\pi)^2 \sum_{k} |c_k|^2 \int_{|\xi| \geq 1} |\xi|  \delta(\xi - \xi_0 - k) \di \xi\\ 
    & = (2\pi)^2 \sum_{k} |c_k|^2 |\xi_0+ k| \\
    & \geq  (2\pi)^2 |\xi_0| \norm{\chi_3}_{L^2}^2 - C_0 \norm{\chi_3}_\frac12^2.
\end{split}
\end{equation}
 Thus putting \eqref{last} and \eqref{c1} in \eqref{c0} we obtain \eqref{done} with $C_\chi=(2\pi)^2$ and $C_r$ resulting from the sum of the constants $C_g$ and $C_0\norm{\chi_3}_{\frac12}$ and the remainders $r_0$ and $r_1$ (see \eqref{c1} and \eqref{c0}). This concludes the proof.

\section{Genericity and conclusion of the proof of Theorem \ref{main.thm}}\label{sec.generic}

We now conclude proving Point 3 of Theorem \ref{main.thm}. To this aim we first define the set 
\begin{equation}\label{R}
    \cR:= \{ f \in C^\infty(\T) : f(x)=0 \implies f'(x) \neq 0 \} \subset C^\infty(\T),
\end{equation}
of smooth non degenerate functions on $\T$ (see Definition \ref{non.deg}). We have the following property: 
\begin{lemma}\label{dense.R}
    The set $\cR$ is open and dense in $C^\infty(\T)$ endowed with its Fréchet topology (i.e., the one induced by the family of seminorms $(\norm{\cdot}_{C^k})_{k \geq 0}$). 
\end{lemma}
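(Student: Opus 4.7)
The plan is to establish openness by a compactness-plus-implicit-function-type argument and density as a direct consequence of Sard's theorem applied to $f$ as a map $\T \to \R$.

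\textbf{Openness.} Fix $f \in \cR$. Since $f$ is smooth, $\T$ is compact, and every zero of $f$ is non-degenerate, the zero set $Z(f) = \{x_1, \ldots, x_n\}$ is finite. Set $\eta := \min_{1 \le i \le n} |f'(x_i)|/2 > 0$. By continuity of $f'$, there exist disjoint open neighborhoods $U_i$ of $x_i$ in $\T$ on which $|f'| \geq \eta$. Set $K := \T \setminus \bigcup_i U_i$; since $K$ is compact and $f$ does not vanish on $K$, we have $\mu := \min_K |f| > 0$. Now suppose $g \in C^\infty(\T)$ satisfies $\|g-f\|_{C^0} < \mu/2$ and $\|g-f\|_{C^1} < \eta/2$. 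Then $|g| \geq \mu/2 > 0$ on $K$, so all zeros of $g$ lie in $\bigcup_i U_i$, and $|g'| \geq \eta/2 > 0$ on each $U_i$. Hence every zero of $g$ is non-degenerate, so $g \in \cR$. Since the Fréchet topology on $C^\infty(\T)$ is generated by the $C^k$ seminorms, this shows $\cR$ is open.

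\textbf{Density.} Fix $f \in C^\infty(\T)$ and a basic open neighborhood of $f$, which we may take to consist of all $g$ with $\|g-f\|_{C^k} < \epsilon$ for some given $k \in \N$ and $\epsilon > 0$. Regard $f$ as a smooth map $\T \to \R$; by Sard's theorem the set of critical values of $f$ has Lebesgue measure zero in $\R$, so we may pick a regular value $c \in \R$ with $|c| < \epsilon$. Define $g := f - c \in C^\infty(\T; \R)$. Whenever $g(x_0) = 0$ we have $f(x_0) = c$, so by regularity $f'(x_0) \neq 0$, i.e.\ $g'(x_0) = f'(x_0) \neq 0$. Hence $g \in \cR$. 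Finally, $g - f \equiv -c$ is constant, so $\|g - f\|_{C^k} = |c| < \epsilon$ (with all higher derivatives vanishing), which places $g$ in the chosen neighborhood of $f$.

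The only subtle ingredient is Sard's theorem, which is entirely standard in this one-dimensional setting (it reduces to the fact that $f(\{f'=0\})$ has measure zero). Neither step presents a genuine obstacle; the proof is essentially a packaging of transversality with the compactness of $\T$.
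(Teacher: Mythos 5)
Your proof is correct and follows essentially the same route as the paper: density is obtained exactly as in the paper by using Sard's theorem to pick a small regular value $c$ and replacing $f$ by $f-c$. For openness the paper only gestures at the argument, whereas you supply the standard compactness estimate controlling the $C^0$ norm away from the zeros and the $C^1$ norm near them; this is a correct and complete filling-in of the omitted detail rather than a different method.
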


Let us postpone the proof of this fact and first use it to conclude the proof. To this aim, consider the application 
\begin{equation}
    \cF : C^\infty(\T^2) \to C^\infty(\T), \quad \cF(V(t,x)) = \la V \ra_m(x).
\end{equation}
One can verify that $\cF$ is linear and continuous. Moreover $\cV_{m, s} \cup \cV_{m, u}= \cF^{-1}(\cR)$. This immediately implies that $\cV_{m, s} \cup \cV_{m, u}$ is open. We now prove density of $\cV_{m, s} \cup \cV_{m, u}$. To this aim, take any $W \in C^\infty(\T^2)$, and consider its resonant average $\la W \ra_m(x)$ (recall Definition \ref{resonant.average}). From density of the set $\cR$ in \eqref{R}, for every $\e>0$ there exists $V_0 \in \cR$ such that $\td_{C^\infty(\T)}(V_0, \la W \ra_m) < \e$. Thus, considering the function 
\begin{equation}
    W_0(t,x):= V_0(x + mt) + ( W(t, x) - \la W \ra_m (x +mt)) 
\end{equation}
one finds that $\la W_0 \ra_m(x) = V_0(x) \in \cR$ by construction. Moreover, 
$$
\td_{C^\infty(\T^2)} ( W_0, W) = \td_{C^\infty(\T)} ( V_0, \la W \ra_m) < \e,
$$
concluding the proof. 

We now sketch the proof Lemma \ref{dense.R}.
\begin{proof}[Proof of Lemma \ref{dense.R}]
    We omit the proof of the fact that the set $\cR$ is open, since it is a direct application of Sard's Lemma and of the properties of the Fréchet topology (which ensures a control both on the $C^0$ and $C^1$ norms). We now prove that $\cR$ is dense in $C^\infty(\T)$: take any function $g \in C^\infty(\T) \setminus \cR$ (this means that $g$ has some degenerate zeroes). Using Sard's Lemma, for any $\e>0$, there exists $\e_0 \in [-\e, \e]$,regular value for $g$. Thus defining $h(x):= g(x) -\e_0$, we obtain that $\td_{C^\infty(\T)}(h, g)= \e_0< \e$. Moreover, the zeroes of $h(x)$ are the points such that $g(x)= \e_0$, thus $h'(x)= g'(x) \neq 0$, for all such $x \in \T$ since $\e_0$ is a regular value for $g$. Thus $h \in \cR$  concluding the proof. 
\end{proof}

\small

\bibliographystyle{plain}

\begin{thebibliography}{10}

\bibitem{Baldi}
P.~Baldi.
\newblock Periodic solutions of fully nonlinear autonomous equations of {Benjamin{\textendash}Ono} type.
\newblock {\em Annales de l'I.H.P. Analyse non lin\'eaire}, 30(1):33--77, 2013.

\bibitem{BBHM2018}
\newblock P. Baldi, M. Berti, E. Haus, R. Montalto.
\newblock Time quasi-periodic gravity water waves in finite depth. 
\newblock{\emph{Invent. Math.}} 214:739–911, 2018. 

\bibitem{BBM2014}
\newblock P. Baldi, M. Berti, R. Montalto. 
\newblock{ KAM for quasi-linear and fully nonlinear forced perturbations of Airy equation.} 
\newblock{ \emph{Math. Ann.}} 359(1–2):471–536, 2014. 

 
\bibitem{BGMR2017}
\newblock D. Bambusi, B. Grebert, A. Maspero, D. Robert.
\newblock Growth of Sobolev norms for abstract linear Schrödinger Equations. 
\newblock{\emph{JEMS}}, 2017.

\bibitem{BGMR}
D.~Bambusi, B.~Gr{\'e}bert, A.~Maspero, D.~Robert.
\newblock Reducibility of the quantum harmonic oscillator in d-dimensions with
  polynomial time-dependent perturbation.
\newblock {\em Analysis {\&} {PDE}}, 11(3):775--799, 2018.

\bibitem{BGMRV}
D. Bambusi, B. Grébert, A. Maspero, D. Robert, C. Villegas-Blas.
\newblock Longtime dynamics for the Landau Hamiltonian with a time dependent magnetic field.
\newblock  {\em EMS Surv. Math. Sci.} 12(1):155–184, 2025.


\bibitem{BLM.red.T}
\newblock D. Bambusi, B. Langella, R. Montalto.
\newblock Reducibility of Non-Resonant Transport Equation on $T^d$ with Unbounded Perturbations
\newblock {\em Annales Henri Poincaré}, 20:1893–1929, 2019.

\bibitem{Bellissard}
\newblock J. Bellissard. 
\newblock Stability and Instability in Quantum Mechanics. 
\newblock {\emph{Trends and developments in the eighties (Bielefeld. 1982/1983)}}  World Scientific, Singapore, 1-106, 1985.

\bibitem{Bourgain1999}
\newblock J. Bourgain. 
\newblock On growth of Sobolev norms in linear Schrödinger equations with smooth time dependent potential. 
\newblock{\emph{Journal d’Analyse Mathématique,}} 77(1):315–348, 1999.

\bibitem{Chabert}
\newblock A. Chabert. 
\newblock{ Weakly turbulent solution to Schrödinger equation on the two-dimensional torus with real potential decaying at infinity.} 
\newblock {\emph{\em Analysis {\&} {PDE}}, 18(8):2061–2080}, 2025.

\bibitem{Chabert2}
A. Chabert.
\newblock A Weakly Turbulent solution to the cubic Nonlinear Harmonic Oscillator on $\R^2$ perturbed by a real smooth potential decaying to zero at infinity.
\newblock{\em Comm PDE}, 49(3): 185-216, 2024.

\bibitem{Vlasov}
Y. Chaubet, D. Han-Kwan, G. Rivière.
\newblock Nonlinear chaotic Vlasov equations. \newblock{\em{ArXiv preprint, arXiv:2407.04426,} } 2024.

\bibitem{CKSTT}
J.~Colliander, M.~Keel, G.~Staffilani, H.~Takaoka, T.~Tao.
\newblock Transfer of energy to high frequencies in the cubic defocusing
  nonlinear {S}chr\"odinger equation.
\newblock {\em Invent. Math.}, 181(1):39--113, 2010.

\bibitem{DR.MS1}
N. V. Dang, G. Rivière.
  \newblock Spectral analysis of Morse-Smale flows I: construction of the anisotropic spaces. \newblock{\em{Journal de l'Institut de Math{\'e}matiques de Jussieu}}, 19(5):1409–1465, 2020. 

\bibitem{Colin_de_Verdi_re_2020}
Y.~C. de~Verdi{\`{e}}re.
\newblock Spectral theory of pseudodifferential operators of degree 0 and an application to forced linear waves.
\newblock {\em Analysis {\&} {PDE}}, 13(5):1521--1537, 2020.


\bibitem{CdVSR}
Y. C. de Verdière, L. Saint-Raymond.
\newblock Attractors for two‐dimensional waves with homogeneous hamiltonians of degree 0.
\newblock {\em CPAM}, 73(2):421--462, 2020.


\bibitem{Delort}
J.-M. Delort.
\newblock Growth of {S}obolev norms for solutions of time dependent
  {S}chr\"odinger operators with harmonic oscillator potential.
\newblock {\em Comm PDE}, 39(1):1--33, 2014.

\bibitem{FaouRaphael}
E. Faou, P. Raphaël.
\newblock On weakly turbulent solutions to the perturbed linear harmonic oscillator.
\newblock {\em American Journal of Mathematics}, 145(5): 1465--1507, 2023.

\bibitem{Faure.Sjostrand}
F. Faure, J. Sjöstrand.
\newblock Upper Bound on Density of Ruelle Resonances for Anosov Flows. \newblock{\em{Communications in Mathematical Physics}}, 308:325–364, 2011.

\bibitem{FeolaGiulianiMontaltoProcesi}
R. Feola, F. Giuliani, R. Montalto, M. Procesi.
\newblock Reducibility of first order linear operators on tori via Moser’s theorem. 
\newblock{\em{J. Funct. Anal.}}, 276(3):932-970, 2019. Corrigendum ibid. 279(2), Article ID 108542 (2020).


\bibitem{gerard_grellier}
P.~G\'erard, S.~Grellier.
\newblock The cubic {S}zeg{\H o} equation and {H}ankel operators.
\newblock {\em Ast\'erisque}, (389):vi+112, 2017.

\bibitem{GL}
P.~G\'erard,  E. Lenzmann.
\newblock{The Calogero--Moser Derivative Nonlinear Schr\"odinger Equation}.
\newblock{\em CPAM}, 77(10):4008-4062, 2024.

\bibitem{GG}
F. Giuliani, M. Guardia.
\newblock{Sobolev norms explosion for the cubic NLS on irrational tori}.
\newblock{\em Nonlinear Analysis}, 220,  2022. DOI:10.1016/j.na.2022.112865.


\bibitem{GHMMZ}
M. Guardia, E. Haus, Z. Hani, A Maspero, M. Procesi. 
\newblock Strong nonlinear instability and growth of
Sobolev norms near quasiperiodic finite-gap tori for the 2D cubic NLS equation.
\newblock {\em JEMS}, 25(4):1497–1551, 2022. 

\bibitem{guardia_kaloshin}
M. Guardia, V. Kaloshin. 
\newblock Growth of Sobolev norms in the cubic defocusing nonlinear Schr\"odinger 
equation. 
\newblock{\em JEMS}, 17(1):71--149, 2015.


\bibitem{hani14}
Z.~Hani.
\newblock Long-time instability and unbounded {S}obolev orbits for some
  periodic nonlinear {S}chr\"odinger equations.
\newblock {\em ARMA}, 211(3):929--964, 2014.

\bibitem{hani15}
Z.~Hani, B.~Pausader, N.~Tzvetkov, N.~Visciglia.
\newblock Modified scattering for the cubic {S}chr\"odinger equation on product
  spaces and applications.
\newblock {\em Forum of Mathematics, Pi}, 3:e4, 63, 2015.



\bibitem{HausMaspero}
E. Haus, A. Maspero.
\newblock{ Growth of Sobolev norms in time dependent semiclassical anharmonic oscillators}. 
\newblock{\em J.  Func.  Anal.} , 278(2), 108316, 2020.

\bibitem{Kuk}
S. Kuksin. 
\newblock{Growth and oscillations of solutions of nonlinear Schr\"odinger equation}.
\newblock{\em CMP, }178(2):265--280, 1996.

\bibitem{Kuk2}
S. Kuksin.  
\newblock{On turbulence in nonlinear Schr\"odinger equations.}
\newblock{\em GAFA,} 7(4):783--822, 1997.


\bibitem{LangellaMasperoRotolo25}
B. Langella, A. Maspero, M.T. Rotolo. 
\newblock  Growth of Sobolev norms for completely resonant quantum harmonic oscillators on $\mathbb{R}^2$.
 \newblock {\em J. Differ. Equations}, 433, 113221, 48 p., 2025.


\bibitem{lee2003introduction}
J.M. Lee.
\newblock {\em Introduction to Smooth Manifolds}.
\newblock Graduate Texts in Mathematics. Springer, 2003.

\bibitem{Lefeuvre}
T. Lefeuvre.
\newblock {\em Microlocal Analysis in Hyperbolic Dynamics and Geometry}.
\newblock Cours Sp\'ecialis\'es (SMF), to appear.

\bibitem{LLZ}
Z. Liang, J. Luo, Z. Zhao. 
\newblock{Symplectic Normal Form and Growth of Sobolev Norm. }
\newblock{\em J. Differ. Equations}, 449, 113702, 2025.

\bibitem{Mas18}
A. Maspero.
\newblock Lower bounds on the growth of Sobolev norms in some linear time dependent Schr{\"o}dinger equations.
\newblock {\em MRL}, 26(4):1197--1215, 2019.

\bibitem{Mas22}
A.~Maspero.
\newblock Growth of {S}obolev norms in linear {S}chr\"odinger equations as a dispersive phenomenon.
\newblock {\em Advances in Mathematics}, 411:Paper No. 108800, 48, 2022.


 \bibitem{Mas23}
A.~Maspero.
\newblock Generic transporters for the linear time-dependent quantum harmonic oscillator on {$\R$}.
\newblock {\em IMRN}, (14):12088--12118, 2023.

\bibitem{MasperoMurgante}
A. Maspero, F. Murgante.
\newblock One dimensional energy cascades in a fractional quasilinear NLS.
\newblock {\em arXiv}, 	arXiv:2408.01097,  2024.


\bibitem{MasperoRobert}
A. Maspero, D. Robert.
\newblock  On time dependent Schr{\"o}dinger equations: global well-posedness and growth of {Sobolev} norms.
 \newblock {\em{Journal of Functional Analysis}} 273(2):721-781, 2017.
 

\bibitem{Riviere.Rotolo}
G. Riviére, M. T. Rotolo.
\newblock{Sobolev Instability for Perturbation of Periodic Transport Equations}.
\newblock{\em{ArXiv preprint, arXiv:2510.17350}}, 2025.

\bibitem{Shubin}
M.A. Shubin.
  \newblock{\em Pseudodifferential Operators and Spectral Theory.}
  \newblock Springer Berlin Heidelberg, 2001. 

\bibitem{Taira}
K. Taira.
\newblock Equivalence of Classical and Quantum completeness for Real Principal Type Operators on the Circle.
\newblock {\em ArXiv preprint, arXiv:}2004.07547, 2024. 

\bibitem{Thomann}
L. Thomann.
\newblock Growth of Sobolev norms for linear Schr{\"o}dinger operators.
\newblock {\em Ann. H. Lebesgue}, 4:1595-1618, 2021.


\bibitem{Zworski}
M. Zworski.
\newblock{\em Semiclassical Analysis}. 
\newblock Graduate Studies in Mathematics. American Mathematical Soc., vol. 138, 2012. 

\end{thebibliography}

\end{document}